\newtheorem{theorem}{Theorem}
\newtheorem{lem}[theorem]{Lemma}
\newtheorem{cor}[theorem]{Corollary}
\newtheorem{prop}[theorem]{Proposition}
\theoremstyle{definition}
\newtheorem{defn}[theorem]{Definition}
\theoremstyle{remark}
\newtheorem{rmk}[theorem]{Remark}
\newtheorem*{rmk*}{Remark}
\DeclareMathOperator{\e}{\mathbb{E}}
\def\p{\mathbb{P}}
\def\P{\mathbb{P}}
\def\BR{{\mathbb{R}}}
\def\BZ{{\mathbb{Z}}}
\def\t{(t_1,t_2,\ldots)}
\def\range{{\mathcal{B}}}
\def\b{(b_0,b_1,b_2,\ldots)}
\def\colln{{\mathcal{C}_n}}
\DeclareMathOperator{\var}{Var}
\def\cF{\mathcal{F}}
\newcommand{\la}{\lambda}
\newcommand{\eand}{\quad\text{and}\quad}
\DeclareMathOperator{\Var}{Var}
\newcommand{\BX}{\mathbf{X}}
\newcommand{\s}{\sigma}
\newcommand{\E}{\mathbb{E}}
\begin{document}

\title[Asymptotic distribution for the birthday problem]{Asymptotic distribution for the birthday problem with multiple coincidences, via an embedding of the collision process}
\author{R. Arratia \and S. Garibaldi \and J. Kilian}
\address{Arratia: Department of Mathematics, University of Southern California, Denney Research Building DRB 155, Los Angeles, CA 90089-1113}
\email{rarratia \text{at} usc.edu}

\address{Garibaldi: Institute for Pure and Applied Mathematics, UCLA, 460 Portola Plaza, Box 957121, Los Angeles, California 90095-7121, USA}
\email{skip \text{at} member.ams.org}

\address{Kilian: IDA Center for Communications Research, 805 Bunn Dr., Princeton, NJ 08540}

\subjclass[2010]{60C05 (11Y16, 62E20)}


\begin{abstract}
We study the random variable $B(c,n)$, which counts the number of balls that must be thrown into $n$ equally-sized bins in order to obtain $c$ collisions.  The asymptotic expected value of $B(1,n)$ is the well-known $\sqrt{n\pi/2}$ appearing in the solution to the birthday problem; the limit distribution and asymptotic moments of $B(1,n)$ are also well-known.  We calculate the distribution and moments of $B(c,n)$ asymptotically as $n$ goes to $\infty$ and $c = O(n)$.

Our main tools are an embedding of the collision process, realizing the process as a deterministic function of the standard Poisson process, and a central limit result by R\'enyi.
\end{abstract}

\maketitle

\tableofcontents
 
\section{Introduction}

\subsection{Scientific motivation}

Imagine throwing balls into $n$ equally-sized bins.  One  \emph{collision} occurs whenever a ball lands in a bin that already contains a ball, so that a bin containing $k$ balls contributes $k-1$ to the total number of collisions.  This notion of collision is relevant for hash tables in computer science as in \cite[\S6.4]{Knuth3} and for cryptology as in \cite{KuhnStruik}; it is the definition of collision from \cite[\S3.3.2I, p.~69]{Knuth2}.  

We study the random variable $B(c,n)$, which counts the number of balls you throw into $n$ bins to produce $c$ collisions. 
The classic birthday problem as described in \cite[p.~33]{Feller}, \cite[Problem 31]{Mosteller}, and \cite{Strogatz} asks for the median of $B(1, 365)$.
We define $B(0,n)=0$.  We have: 
\begin{equation} \label{big.c}
c+1 \le B(c,n) \le c+n \quad \text{for $c = 1, 2, \ldots$.}
\end{equation}
For $n = 1$, this forces  $B(c,1) = c+1$, which makes sense, since with a single bin, the first ball does not make a collision but all subsequent balls do. 


The variable $B(1,n)$ appears in the standard birthday problem for a year with $n$ days, so it has already been well studied.  Indeed, 
\begin{equation} \label{birthday}
B(1,n)/\sqrt{n} \Rightarrow L, \quad \text{and} \quad  \e B(1,n) \sim \sqrt{n} \  \e  L  = 
\sqrt{\frac{\pi}2 \, n \, }  ,
\end{equation}
where the limit distribution of $L$ is attributed to $L$ord Rayleigh, with $\P(L > t) = \exp(-t^2/2)$ for $t > 0$, and $\e L = \sqrt{\pi/2}$, see \cite[11.3]{RS}, \cite[(3.7), (3.10)]{Harris}, or \cite[Example 3.2.5]{Durrett}.  
On the other hand,  for $c_n \to \infty$ with $c_n=o(n^{1/4})$ Kuhn and Struick \cite[p.~221]{KuhnStruik} show that
$$  \e B(c_n,n)  \sim \sqrt{2 \, c_n \, n \,},
$$
which matches \eqref{birthday} apart from the coefficient of $c_n \,n$ inside the square root changing \emph{from} $\pi/2$ \emph{to} $2$.
Indeed, the impetus for this paper was the desire to explain how $\pi/2$ changes to $2$, and the title of our initial writeup was ``From $\pi/2$  to $2$:  $\pi/2, 9\pi/16, 75 \pi/128, 1225\pi/2048,\ldots,2$''.  See subsection \ref{sect gamma} for more details.

\subsection{Quick survey of the contents}
We consider $B(c,n)$, the number of balls that must be thrown into $n$ bins, in order to get a specified number $c$ of collisions.
To investigate this, we consider in Section \ref{sect embed} an embedding of the collision process into a standard Poisson process;  the embedding may be of interest in its own right, and we give a variety of almost sure uniform error bounds, culminating in Theorem \ref{thm uniform}.  Even the simplest process convergence, Theorem \ref{thm rateless}, which holds for all outcomes $\omega \in \Omega$, implies a process distributional limit, Corollary \ref{process converge distribution},  which in turn gives the limit one-dimensional distributional limit:  Corollary \ref{marginal converge distribution}, stating that for fixed $c$,  $B(c,n)/\sqrt{n} \Rightarrow \sqrt{2T_c}$, the Chi distribution with $2c$ degrees of freedom.   This in turn, combined with a uniform integrability estimate in Section \ref{sect ui},  gives the asymptotic 
mean and variance of $B(c,n)$ for fixed $c$, with details given in Section \ref{sect gamma}, in particular \eqref{first asymptotic moment}, \eqref{gamma formula}, and \eqref{gamma formula var}.

We are mainly interested in the case where $c_n = o(n)$, because that is the case relevant for applications as in \cite{KuhnStruik}.  However, in analyzing the variance of $B(c,n)$,  for  $c \approx n^a$  with $1/2 \le a <1$,  our embedding is
not an appropriate tool, and we were forced to work with duality and R\'enyi's central limit theorem for the number of empty boxes.  This duality also \emph{easily} handles the ``central region'',
corresponding to   $c_n/n \to \alpha_0 \in (0,\infty)$,  hence we include such results in sections \ref{sect dual}--\ref{moments.duality}, such as Theorem \ref{MT} and Corollary \ref{cor var}.  Note that the results in the last three sections concern a centered distribution $B(c,n) - \beta(c,n)$.  Our penultimate result, Corollary \ref{cor var}, determines the moments of the centered distribution and the variance of $B(c,n)$ over a large range of choices for $c$.  This, combined with the results of Section \ref{sect gamma} extend the result from \cite{KuhnStruik} to a much larger regime.

Our main results are new, despite the substantial existing literature on other occupancy problems, such as \cite{urns}, \cite{KSC}, \cite{Holst:survey}, \cite{Holst}, \cite{GHP}, etc., and other work on $B$ such as \cite{CamarriPitman}.  (Although Theorem \ref{thm uniform} could be recovered over a smaller regime by using Poisson approximation as in \cite{AGG} or \cite{BHJ}, cf.~Remark \ref{AGG} below.)

\begin{table}[hbt]
\begin{tabular}{||cccc||} \hline
regime&convergence&uniform integrability&moments \\ \hline
fixed $c$&Theorem \ref{marginal converge distribution}&Lemma \ref{UI}&Corollaries \ref{cor fixed c}, \ref{var} \\
$c = O(n^\alpha)$, $\alpha < 1$&Theorem \ref{thm uniform}&Lemma \ref{UI}&Corollary \ref{cor c grows} \\
$c \to \infty$, $c/n \to \alpha_0 \in [0, \infty)$&Theorem \ref{MT}&Lemma \ref{ui2}&Corollaries \ref{cor c grows}, \ref{cor var}, \ref{on} \\ \hline
\end{tabular}
\caption{Summary of results concerning $B(c, n)$ as $n \to \infty$.  The first two lines deal with the uncentered $B(c, n)$ and the last line with a centered version, $B(c,n) - \beta(c,n)$.}
\end{table}

\section{The classical occupancy process} \label{occ.def}

The classical occupancy \emph{problem} is specified in terms of a fixed number of balls, and a fixed number of equally likely bins.   We choose the notation $b$ balls and $n$ bins, although the notation $n$ balls and $N$ bins, used for example by R\'enyi, is tempting, as it corresponds to the tradition, in statistics, of a sample of size $n$ taken from a population of size $N$.  The classical
occupancy problem starts with independent and identically distributed $X_1,X_2,\ldots$, with $\p(X_t=i)=1/n$ for $i=1$ to $n$ and $t \ge 1$, so that all $n^b$ possible values for $(X_1,\ldots,X_b)$ are equally likely,  and  considers the distributions of $N_0=N_0(b,n)$, the number of empty bins; $I=I(b,n)$, the number of occupied bins; and more generally, for each $k=0,1,2,\ldots$,
the distribution of $N_k=N_k(b,n)$, the number of bins with exactly $k$ balls.     Even at the level of describing the distribution of an individual $N_k(b,n)$, there is much to be said, see for example  \cite{KSC,Renyi:orig,Weiss,Englund,Mikhailov,BG,BGI}.  

As a summary of the notation:
\begin{equation}\label{notations 1}
     N_k(b,n) = \sum_{i=1}^n 1\left(k=\sum_{t=1}^b 1(X_t=i)\right)
\end{equation}
is the number of bins containing exactly $k$ balls, when $b$ \emph{b}alls have been tossed into
$n$ bi\emph{n}s.   As a check:
$$  
  \sum_{k\ge 0} N_k(b,n) = n \eand \sum_{k\ge 0} k \, N_k(b,n)  = b.
$$
The number of \emph{occupied} bins,  when $b$ balls have been tossed into $n$ bins, is
\begin{equation}\label{notation I}
   I(b,n) := n - N_0(b,n) = \sum_{k \ge 1} N_k(b,n),
\end{equation}
and the number of collisions obtained is
\begin{equation}\label{notation C}
   C(b,n) := b - I(b,n)  = \sum_{k \ge 1} (k-1) N_k(b,n).
\end{equation}

The classic occupancy \emph{process} goes a little further:  the number $n$ of bins is fixed, and
balls are tossed in succession, so that the  count of occupied bins,
$I(b,n)$, 
is determined by the locations $X_1,X_2,\ldots,X_b$ of the first $b$ balls, and the entire
process $(I(0,n),I(1,n),I(2,n),\ldots,I(b,n),\ldots)$  is determined by the locations $X_1,X_2,\ldots$
of the balls in $\{1,2,\ldots,n\}$.  Thanks to equally likely bins, 
the process $(I(0,n),I(1,n),I(2,n),\ldots,I(b,n),\ldots)$  also has the structure of a birth process, with
\begin{eqnarray*}
  \p(I(t+1,n)=i \mid I(t,n)=i) &=&  \frac{i}{n}, \\ \p(I(t+1,n)=i+1 \mid I(t,n)=i) &=&  \frac{n-i}{n},
\end{eqnarray*}
and this idea, exploited by R\'enyi \cite{Renyi:orig}, may be considered as the foundation of our embedding, given in Section \ref{sect embed}. 

The collisions process can also be viewed as a birth process, with 
\begin{eqnarray*}
\p(C(b+1,n)=c+1 \mid C(b,n)=c) &=&  \frac{b-c}{n}, \\
  \p(C(b+1,n)=c \mid C(b,n)=c) &=&  \frac{n-(b-c)}{n} .
\end{eqnarray*}

Finally, given the collision counting process, $(C(0,n),C(1,n),C(2,n),\ldots)$, the number of balls needed to get $c$ collisions is defined by \emph{duality}:  for $c=0,1,2,\ldots$,
\begin{equation}\label{define B}
   B(c,n) := \inf \{ b:  C(b,n) \ge c \},
\end{equation}
where, of course, the infimum of the empty set is taken to be $\infty$.

 
\section{The embedding}\label{sect embed}

\subsection{Motivation and informal description}\label{sect informal}

Let $Y$ denote the standard, rate 1 Poisson process.  It has the property that $Y(t)$ is a Poisson random variable with expectation $t$.  Define $T_c$ to be the time of the $c$-th arrival in $Y$.

Let  $f(p)$ be the amount of time one must run the process $Y$ so that, with probability $p$, there is at least one arrival; by standard Poisson process calculations, for $0 \le p <1$, $1 -  p = e^{-f(p)}$.   We extend this to $f\! : [0,1] \to [0,\infty]$ given by
\[ 
f(p) : = {-\log(1-p)} = p + p^2/2 + p^3/3 + \cdots \quad \text{for $0 \le p < 1$}
\] 
and $f(1) := \infty$.
Clearly $f$ is strictly increasing, and maps its domain onto its range.

For fixed positive integer $n$, we now define a coupling of the random variables $B(c,n)$ and $T_c$ for nonnegative integer $c$.  To sample $B(1,n), B(2,n), \ldots,$ $B(c_{\rm max},n)$, we define state variables $i$, $t$ and $c$, all of which are initially set to $0$.  For intuition, $i$ denotes the number of occupied bins, $t$ denotes the amount of time $Y$ has run for, and $c$ denotes the number of collisions.  Our sampling algorithm repeats the following sequence of steps until $c=c_{\rm max}$.
\begin{enumerate}
\renewcommand{\theenumi}{\alph{enumi}}
\item \label{alg1} Run $Y$ for \emph{up to} $f(i/n)$ units of time, stopping immediately if there is an arrival; let $a$ be the first arrival time if there is one.
\item If there is no arrival, add $f(i/n)$ to $t$ and increment $i$ by 1; we call this a ``miss'' step; it corresponds to a ball being thrown without causing a collision.  If there is an arrival, increment $c$, add $a$ to $t$ and put $(B(c,n), T_c) = (c + i, t)$; we call this a ``hit'' step.
\item Return to step \eqref{alg1}.  
\end{enumerate}
Note that conditioned on
the arrivals $(T_1, T_2,...)$, the sampling process described above is deterministic.

Also, the sequence $(B(1, n), B(2,n), \ldots)$ obtained from the sampling algorithm described above has the same distribution as the sequence of the same name defined in terms of throwing balls into bins.  Indeed, if you throw a ball into 1 of $n$ bins, $i$ of which are occupied, the probability of that throw causing a collision is $i/n$, the same as the probability of an arrival in step \eqref{alg1}.

\subsection{Formal description}
To minimize notation, we will take the sample space to be the set of strictly increasing sequences of
strictly positive real numbers, since such a sequence corresponds to the sequence of arrival times in the standard Poisson process:
\begin{equation}\label{def Omega}
\Omega = \{ \omega = \t \in \BR^\infty:  0 < t_1 < t_2 < \cdots \}.
\end{equation}
Of course, in this setup, the random variable $T_c$ is just the $c$-th coordinate, so for
$\omega = \t$, $T_c(\omega)=t_c$.  

\begin{defn}[Formal specification of the embedding] \label{formal}
For any $n=1,2,\ldots$ and $\omega \in \Omega$, we \emph{define} $B(c,n)(\omega)$ and $J(c,n)(\omega)$ 
for $c=0,1,2,\ldots$ recursively, via
$$ 
    B(0,n):=0, \quad J(0,n):=0,
$$
and for $c \ge 1$,   
\begin{multline}
   B(c,n)(\omega) := B(c-1,n)(\omega)  \label{recurse} \\ 
   +  \inf \left\{ j: \left( \sum_{J(c-1,n)(\omega) \le i <  J(c-1,n)(\omega) +j} f(i/n) \right)  \ge \left( T_c(\omega)-T_{c-1}(\omega) \right)\right\}   
\end{multline}
and
\[
J(c,n)(\omega):= B(c,n)(\omega)-c.
\]
\end{defn}

(We think of $J(c,n) := B(c,n)-c$ as the number of
\emph{occupied} bins, when the number of balls tossed is just enough to have formed $c$ collisions.)

We want to separate which properties of our coupling are \emph{deterministic} from  which are
\emph{distributional}.  Hence to serve as the \emph{range} for the coupling, with the notation $\BZ_+ = \{0,1,2,\ldots \}$ for the nonnegative integers, we define
\begin{equation}\label{def range}
  \range = \left\{ \b \in \BZ_+^{\BZ_+}  \! : 0 = b_0 < b_1 < b_2 < \cdots \right\}.
\end{equation}

\begin{theorem}\label{thm embed}
With $\Omega$ given by \eqref{def Omega} and $\range$ given by \eqref{def range}, for every value $n=1,2,\ldots$, the recursion in Definition \ref{formal} defines a map $\colln$ with domain $\Omega$ and range   $\range$:
\begin{eqnarray}
    \colln \! : \Omega & \to & \range  \nonumber  \\
                   \omega & \mapsto &  ( B(0,n)(\omega), B(1,n)(\omega), B(2,n)(\omega),\ldots ).\label{process coupling}
\end{eqnarray}

When $\Omega$ is extended to $(\Omega,\mathcal{F},\p)$ so that $\omega=\t$ is distributed as the sequence of arrival times in the standard Poisson process, the resulting sequence $(B(0,n), B(1,n), B(2,n),\ldots )$ is distributed
as the sequence for the classical occupancy model, with $B(c,n)$ being the number of balls that are needed to get $c$ collisions, for $c=0,1,2,\ldots$, as defined in section \ref{occ.def}.
\end{theorem}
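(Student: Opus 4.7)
The statement has two parts: a deterministic one---that $\colln$ is a well-defined map $\Omega \to \range$---and a distributional one---that its pushforward under the standard Poisson law equals the law of the classical collision sequence. I would tackle them in that order.

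For well-definedness, the only point to check is that the infimum in \eqref{recurse} is attained at some finite positive integer $j$. Since $f(1) = +\infty$, as soon as $j$ is large enough that the summation index set $\{J(c-1,n),\ldots,J(c-1,n)+j-1\}$ contains $i = n$, the partial sum is $+\infty$, which dominates the finite positive increment $T_c(\omega) - T_{c-1}(\omega)$; so the infimum exists. One then checks $J(c,n) \le n$ by induction, giving $B(c,n) = c + J(c,n) \le c + n$ in agreement with the upper bound in \eqref{big.c}, and strict monotonicity of $c \mapsto B(c,n)$ is immediate because the infimum is always at least $1$. Thus the image sequence lies in $\range$.

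For the distributional claim, I would exhibit both sequences $(J(c,n))_{c \ge 0}$ as Markov chains on the state space $\{0,1,\ldots,n\}$, identify their initial values, and match their one-step transition probabilities. On the embedding side, under the Poisson law the increments $E_c := T_c - T_{c-1}$ are i.i.d.\ exponential of mean $1$, and the recursion \eqref{recurse} expresses $B(c,n) - B(c-1,n)$ as a deterministic function of $E_c$ and $J(c-1,n)$ alone; so $(J(c,n))_{c\ge 0}$ is Markov starting at $0$. Using $\P(E_c > s) = e^{-s}$ together with the defining identity $e^{-f(p)} = 1-p$, a telescoping calculation gives, for $j_* \in \{0,\ldots,n\}$ and $j \ge 1$,
\[
\P\bigl(B(c,n) - B(c-1,n) = j \bigm| J(c-1,n) = j_*\bigr) = \prod_{i=j_*}^{j_*+j-2}\!\left(1 - i/n\right) \cdot \frac{j_* + j - 1}{n}.
\]
On the classical-occupancy side, conditional on $J(c-1,n) = j_*$ the subsequent ball positions are i.i.d.\ uniform on $\{1,\ldots,n\}$, so the probability that the first $j-1$ balls each miss (each miss incrementing the occupied count by one) and the $j$-th ball hits is exactly the same product. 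Both chains thus have the same initial state and the same transitions, hence the same law.

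The only substantive computation is the telescoping in the display, which is forced by $f(p) = -\log(1-p)$; everything else is bookkeeping. Conceptually, $f$ is engineered precisely so that the probability of a rate-$1$ Poisson process having no arrival in a window of length $f(i/n)$ equals $1 - i/n$, matching the probability that a single ball misses all $i$ currently-occupied bins---which is what makes the transitions agree. I do not anticipate any serious obstacle beyond writing the induction out carefully.
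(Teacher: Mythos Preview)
Your proposal is correct. The well-definedness part is identical in spirit to the paper's: both use $f(n/n)=\infty$ to guarantee the infimum is finite, and the strict positivity of $T_c-T_{c-1}$ to guarantee it is at least $1$.

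For the distributional claim, the underlying idea is the same---the function $f$ is chosen so that the no-arrival probability over a window of length $f(i/n)$ equals the miss probability $1-i/n$---but the organization differs. The paper defines the dual process $C(b,n)$ from the embedding and inducts on the number of balls $b$, verifying that the conditional probability of a collision at each toss matches the classical one (this is its appeal to ``the last sentence of Section~\ref{sect informal}''). You instead treat $(J(c,n))_{c\ge 0}$ directly as a Markov chain indexed by the collision count, and match its transition kernel by the explicit telescoping computation
\[
\P\bigl(B(c,n)-B(c-1,n)=j \bigm| J(c-1,n)=j_*\bigr)=\prod_{i=j_*}^{j_*+j-2}\!\Bigl(1-\tfrac{i}{n}\Bigr)\cdot\frac{j_*+j-1}{n}.
\]
Your route is slightly more self-contained and makes the role of $f$ fully explicit, at the cost of computing the whole geometric-style gap distribution rather than a single Bernoulli; the paper's route is terser but leans on the informal description for the key probability match. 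Either way the content is the same.
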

\begin{proof}
To see that $\colln$ maps $\Omega$ \emph{into} $\range$, we argue by induction on $c$.   
In \eqref{recurse},   $\omega \in \Omega$ guarantees that $t_c-t_{c-1} \in (0,\infty)$, hence any $j$ in the set on the right side of \eqref{recurse} is strictly positive.   This set is nonempty, since  $f(n/n) = \infty$, 
hence the infimum of the set is a positive integer at most $n$.  Thus, for every $\omega \in \Omega$, for  $c=1,2,\ldots$, $b_c -b_{c-1}$ is a positive integer, so $\colln(\omega) \in \range$.  

The distributional claim is proved as follows.  In the classical occupancy model, we defined $C(b,n)$, the number of collisions after $b$ balls have been tossed into $n$ bins, via \eqref{notation C}.
  In terms of the map
$\colln$, here we \emph{define} $C(b,n)$ by duality:  $C(b,n) := \max \{ c: B(c,n) \le b \}$. Note that $J(c,n)$ from Definition \ref{formal} corresponds to $I(C(B(c,n),n),n)$, the number of occupied bins after tossing the ball that causes the $c$-th collision.  

For $b=0,1,2,\ldots$, define 
the statement $S(b)$ to be (The joint distribution of $(C(0,n),C(1,n),\ldots,C(b,n))$, resulting from the map
$\colln$ applied to $(\Omega,\mathcal{F},\p)$, is identical to the joint distribution it would have in the classical occupancy model with $n$ bins.)   In proving $S(b)$ for all $b$, the  last sentence of Section
\ref{sect informal} provides the justification for $S(b)$ implies $S(b+1)$. 
\end{proof}

\subsection{Preliminary analysis of the coupling}
We have 
\begin{equation} \label{Tc.recurse}
    T_{c+1} = T_c + \sum_{i \in [I(B(c,n),n),I(B(c+1,n),n \, ))} f(i/n) \ \ + R(c+1)
\end{equation}
where $R$, the random time involved in the last hit step, is limited by the fraction of bins which were occupied just before the $(c+1)$-st collision:
$$ 
  0 <  R(c+1) \le f\left( \frac{I(B(c+1,n)-1,n)}{n} \right) .
$$

Define an auxiliary function
$$   a(i,n) := \sum_{0 \le j < i} f(j/n).$$
Accumulating the hit or miss steps until $B(c,n)$ balls have been tossed, with $c$ hits --- equivalently, unwinding the recursion in \eqref{Tc.recurse} --- gives
$$   T_c = a(B(c,n)-c,n) + \sum_{x=1}^c R(x).$$
Write $b=B(c,n)$ so that $$i = I(B(c,n),n) = b-c $$ is the number of occupied bins when the $c$-th collision is observed,  and use $ f(i/n)$ as an upper bound on $R(1),\ldots,R(c)$.  This yields,  for all $c,n \ge 1$,
\begin{equation}\label{star}
     a(i,n) \  \le T_c \le \ a(i,n) + c \, f(i/n), \quad \text{where $b = B(c,n)$ and  $i = b-c  = I(b,n)$.}
\end{equation}

The contribution from the first-order term of $f(p)$  to $a(i,n)$ is
\[ 
   a_1(i,n) := \sum_{0 \le j < i} j/n = \frac{i(i-1)}{2n} \ge \frac{(i-1)^2}{2n},
\] 
and 
\begin{equation}\label{a1 approx}
   \left| a_1(i,n) - \frac{i^2}{2n}  \right| \le \frac{i}{2n}.
\end{equation}   
   
If $i<n/2$, then for $j <i$, $p := j/n \in [0,1/2)$ has $f(p) \le p + p^2$.    Hence
\begin{equation}\label{f bound}
  \text{ for } i < n/2, \  f(i/n) \le 2i/n
  \end{equation}
and, for $i<n/2$,
\begin{equation}\label{sandwich bounds}
   0 \le a(i,n)-a_1(i/n) = \sum_{0 \le j <i} (f(j/n) - j/n) \le \sum_{0 \le j < i} (j/n)^2 \le i^3/(3n^2).
\end{equation}
A combination of \eqref{star} with \eqref{a1 approx}, \eqref{f bound}, and \eqref{sandwich bounds} is
\begin{equation}\label{combined bound}
   \text{if }  i := B(c,n)-c 
   <n/2,  \\ \text{ then }   \left| \frac{(B(c,n)-c)^2}{2n} - T_c  \right|  \le \frac{i}{2n} + \frac{i^3}{3n^2} + \frac{2ci}{n}.
\end{equation}
\subsection{Theorems giving distributional and sure convergence}

To elucidate the structure of the embedding, we note that the basic convergence, given by \eqref{process converge}, holds for all $\omega$;  it requires only knowing, for each $c=1,2,\ldots$,  
that $T_c(\omega) \in (0,\infty)$.   It  is a standard pattern in probability theory, that distributional convergence for an infinite-dimensional process, as in \eqref{process converge dist display}, is equivalent to the convergence of the finite dimensional distributions, akin to the distributional version of \eqref{process converge}.  But it is remarkable  that even \emph{one}-dimensional convergence, as in \eqref{marginal converge c}, implies joint convergence  \eqref{process converge} and \eqref{process converge dist display}, with a dependent process limit  --- the reason is that  we are dealing with an embedding,  and can argue that there are no exceptional values $\omega$;  alternately, we could have argued about a.s.~convergence,  and noted that, with a  discrete time setup, the countable union of null sets is again a null set. 

\begin{theorem}\label{thm rateless}
Under the coupling given by Theorem \ref{thm embed},
\begin{equation}\label{process converge}
\left(\frac{B(1,n)}{\sqrt{2n}},\frac{B(2,n)}{\sqrt{2n}},\ldots \right) \to (\sqrt{T_1},\sqrt{T_2},\ldots)
\end{equation}
for all $\omega \in \Omega$, as $n \to \infty$.
\end{theorem}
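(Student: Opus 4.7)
The plan is to extract Theorem \ref{thm rateless} pointwise in $\omega$, one coordinate at a time, directly from the already-established deterministic bound \eqref{combined bound}. Fix $\omega = (t_1,t_2,\ldots) \in \Omega$, so each $T_c(\omega) = t_c$ is a fixed finite positive number. Convergence in $\BR^\infty$ equipped with the product topology is the same as convergence of every coordinate, so it suffices to prove that for each fixed $c \ge 1$,
$$
B(c,n)(\omega)/\sqrt{2n} \longrightarrow \sqrt{T_c(\omega)} \quad \text{as } n \to \infty.
$$

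First I would establish an a priori upper bound on $i := B(c,n)(\omega) - c$ ensuring that $i < n/2$ for all large $n$, since this is the hypothesis needed to invoke \eqref{combined bound}. The lower half of \eqref{star}, combined with $a(i,n) \ge a_1(i,n) \ge (i-1)^2/(2n)$, yields $(i-1)^2 \le 2n\,T_c(\omega)$ and hence
$$
i \le 1 + \sqrt{2n\,T_c(\omega)}.
$$
Thus $i = O_\omega(\sqrt{n})$ for each fixed $c$, so $i < n/2$ holds for all $n \ge n_0(c,\omega)$, and \eqref{combined bound} becomes applicable.

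With \eqref{combined bound} in hand, I would substitute the $O(\sqrt{n})$ bound on $i$ into the three error terms on its right-hand side, obtaining $i/(2n) = O(n^{-1/2})$, $i^3/(3n^2) = O(n^{-1/2})$ and $2ci/n = O(c\,n^{-1/2})$, all of which vanish as $n \to \infty$ for fixed $c$. Therefore
$$
\frac{(B(c,n)(\omega) - c)^2}{2n} \longrightarrow T_c(\omega),
$$
and taking square roots (using $B(c,n)-c \ge 0$) gives $(B(c,n)(\omega) - c)/\sqrt{2n} \to \sqrt{T_c(\omega)}$. Since $c/\sqrt{2n} \to 0$, adding $c$ back yields the desired coordinatewise limit, and doing this for every $c$ gives \eqref{process converge}.

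There is no substantial obstacle: the heavy lifting has already been done in deriving \eqref{combined bound}. The only step that is not a direct appeal to earlier inequalities is the bootstrap $i \le 1 + \sqrt{2n\,T_c(\omega)}$, which is needed precisely to verify the hypothesis $i < n/2$ in \eqref{combined bound}. The remark in the paragraph preceding the theorem, namely that a discrete-time embedding allows us to work pointwise rather than almost surely, is reflected in the fact that the argument uses no measure-theoretic input at all; the conclusion genuinely holds for every $\omega \in \Omega$.
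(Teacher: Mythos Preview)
Your proposal is correct and follows essentially the same approach as the paper's proof: reduce to coordinatewise convergence, use the lower half of \eqref{star} together with $a(i,n)\ge a_1(i,n)\ge (i-1)^2/(2n)$ to get $i=O_\omega(\sqrt{n})$, then plug this into \eqref{combined bound} to make the error $O(n^{-1/2})$, and finally take square roots and absorb the $c/\sqrt{2n}$ term. The paper's write-up differs only cosmetically, additionally emphasizing that one-dimensional convergence already suffices for the infinite-dimensional claim.
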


\begin{proof}
To prove \eqref{process converge}, we note first that the usual topology on $\mathbb{R}^\mathbb{N}$ is the compact-open topology, so convergence is equivalent to having, for each fixed $c$, convergence under  the projection into $\mathbb{R}^c$ using the first $c$ coordinates.  Thus, we prove that for fixed $c$, for every $\omega \in \Omega$, as $n \to \infty$,
\begin{equation}\label{process converge c}
\left(\frac{B(1,n)}{\sqrt{2n}},\frac{B(2,n)}{\sqrt{2n}},\ldots,\frac{B(c,n)}{\sqrt{2n}} \right) \to (\sqrt{T_1},\sqrt{T_2},\ldots,\sqrt{T_c}).
\end{equation}

Write $i = B(c,n)-c$;  this is random, varying with $\omega$.
Using the first half of \eqref{star}, $a_1(i,n) \le a(i,n) \le T_c$ so
$$
   (i-1)^2  \le 2n T_c.
$$
Hence $i = B(c,n)-c = O(\sqrt{n})$ as $n \to \infty, $  for every $\omega \in \Omega$ --- the implicit constants in the big Oh depend on $T_c(\omega)$.  For sufficiently large $n$  (again, depending on $\omega$), $i < n/2$, so the upper
bound in \eqref{combined bound}  applies, and 
\begin{equation}\label{big oh}
    \left| \frac{(B(c,n)-c)^2}{2n} - T_c  \right| \le       \frac{i}{2n} + \frac{i^3}{3n^2} + \frac{2ci}{n}   = O(n^{-1/2}),
\end{equation}
using  $i = O(\sqrt{n})$ and $c=O(1)$ to get the final conclusion in \eqref{big oh}.
 Since $T_c(\omega)>0$, \eqref{big oh} implies
  $(B(c,n)-c) \sim \sqrt{2n T_c}$.   Since $c$ is fixed, this implies
\begin{equation}\label{marginal converge c}
\frac{B(c,n)}{\sqrt{2n}}  \to \sqrt{T_c}.
\end{equation}
Finally, \eqref{marginal converge c} implies \eqref{process converge c};  we could have shortened the proof, since
\eqref{marginal converge c} also implies \eqref{process converge} directly, but as discussed before stating this theorem, we want to highlight the unusual nature of the implication:  one-dimensional convergence implies convergence of  the infinite-dimensional joint distributions.
\end{proof}

As an immediate corollary to  the second statement of Theorem \ref{thm embed}, combined with  Theorem \ref{thm rateless}, we get process distributional convergence, as stated formally by Corollary \ref{process converge distribution}.
\begin{cor}\label{process converge distribution}
In the classical occupancy problem, tossing balls into $n$ equally likely bins, as specified in section \ref{occ.def},
as $n \to \infty$, 
\begin{equation}\label{process converge dist display}
\left(\frac{B(1,n)}{\sqrt{2n}},\frac{B(2,n)}{\sqrt{2n}},\ldots \right) \Rightarrow (\sqrt{T_1},\sqrt{T_2},\ldots),
\end{equation}
where $T_c$ is the time of the $c$-th arrival in a standard Poisson process.$\hfill\qed$
\end{cor}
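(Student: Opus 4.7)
The plan is very short: this is essentially a packaging of Theorem \ref{thm embed} together with Theorem \ref{thm rateless}, using the standard fact that almost sure convergence implies convergence in distribution.

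First I would set up the probability space $(\Omega, \cF, \p)$ in which $\omega = \t$ is distributed as the sequence of arrival times of a standard rate-1 Poisson process, exactly as in the second statement of Theorem \ref{thm embed}. On this space, define the random sequence $(B(0,n), B(1,n), B(2,n), \ldots)$ via the deterministic map $\colln$ of Definition \ref{formal}. By the distributional assertion of Theorem \ref{thm embed}, this sequence has the same joint law as the sequence $(B(0,n), B(1,n), \ldots)$ arising from the classical occupancy model of Section \ref{occ.def}.

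Next I would invoke Theorem \ref{thm rateless}: under this same coupling, for \emph{every} $\omega \in \Omega$,
\[
\left(\frac{B(1,n)(\omega)}{\sqrt{2n}},\frac{B(2,n)(\omega)}{\sqrt{2n}},\ldots \right) \longrightarrow (\sqrt{T_1(\omega)},\sqrt{T_2(\omega)},\ldots)
\]
in the product topology on $\BR^\infty$. Pointwise convergence on all of $\Omega$ is, in particular, almost sure convergence with respect to $\p$. Since almost sure convergence of random variables taking values in the Polish space $\BR^\infty$ (equipped with the compact-open/product topology) implies convergence in distribution, we obtain
\[
\left(\frac{B(1,n)}{\sqrt{2n}},\frac{B(2,n)}{\sqrt{2n}},\ldots \right) \Rightarrow (\sqrt{T_1},\sqrt{T_2},\ldots)
\]
on this coupled probability space. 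Finally, since weak convergence depends only on the joint distributions of the sequences involved, and the joint law on the left matches that of the classical occupancy model by Theorem \ref{thm embed}, the same weak convergence statement holds for the original model. There is no genuine obstacle here; the only minor point to be careful about is to use the product topology (equivalently, convergence of all finite-dimensional marginals) so that the continuous mapping and a.s.-implies-weak arguments apply without additional tightness considerations.
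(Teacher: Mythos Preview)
Your proposal is correct and matches the paper's own argument essentially verbatim: the corollary is obtained by combining the distributional identification from Theorem~\ref{thm embed} with the everywhere convergence of Theorem~\ref{thm rateless}, then noting that sure (hence a.s.) convergence in $\BR^\infty$ implies convergence in distribution. There is nothing to add.
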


\begin{rmk*}
 The joint distributional limit \eqref{process converge dist display} in Corollary \ref{process converge distribution} of course gives the limit
distribution under 
arbitrary continuous functionals on $\mathbb{R}^\mathbb{N}$, and there are many natural examples where the
scaling by $\sqrt{2n}$ can be removed; for example, as $n \to \infty$
$$
  \p( B(3,n)-B(2,n) > B(1,n)) \to \p( \sqrt{T_3}-\sqrt{T_2}>\sqrt{T_1})
$$
and
$$
  \p( B(1,n)B(5,n) > B^2(3,n)) \to \p( \sqrt{T_1T_5}>T_3).
$$
\end{rmk*}

A result even weaker than Corollary \ref{process converge distribution} answers the basic question for collisions:  What is the approximate distribution of the number $B(c,n)$ of balls that need to be tossed, to get $c$ collisions, when there are $n$ equally likely bins?
\begin{cor}\label{marginal converge distribution}
In the classical occupancy problem, tossing balls into $n$ equally likely bins, as specified in section \ref{occ.def},
for each fixed $c=1,2,\ldots$, as $n \to \infty$, 
\begin{equation}\label{converge dist display}
   \frac{B(c,n)}{\sqrt{n}} \Rightarrow \sqrt{2T_c},
\end{equation}
where $T_c$ is the time of the $c$-th arrival in a standard Poisson process.  
\end{cor}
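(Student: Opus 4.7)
The plan is to deduce this corollary as an immediate consequence of the process-level almost sure convergence already established, rather than doing any fresh analysis. My starting point would be Theorem \ref{thm rateless}, whose proof delivers the marginal statement \eqref{marginal converge c}: under the embedding of Theorem \ref{thm embed}, one has $B(c,n)/\sqrt{2n} \to \sqrt{T_c}$ for every $\omega \in \Omega$ as $n \to \infty$. Once $\Omega$ is equipped with the probability measure making $(T_1, T_2, \ldots)$ the arrival times of a rate-$1$ Poisson process (as in the second half of Theorem \ref{thm embed}), this sure convergence is in particular almost sure convergence.

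Almost sure convergence implies convergence in distribution, so $B(c,n)/\sqrt{2n} \Rightarrow \sqrt{T_c}$ with respect to the measure just described. Because Theorem \ref{thm embed} guarantees that, under this measure, the marginal law of $B(c,n)$ coincides with the one arising from the classical occupancy model, the same weak limit holds for the original combinatorial variable. A final multiplication by the constant $\sqrt{2}$ is a continuous transformation, so the continuous mapping theorem converts $B(c,n)/\sqrt{2n} \Rightarrow \sqrt{T_c}$ into $B(c,n)/\sqrt{n} \Rightarrow \sqrt{2 T_c}$, which is exactly \eqref{converge dist display}.

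I do not expect any substantive obstacle here: all of the real work lives in the construction of the embedding and in the quantitative estimate \eqref{combined bound} that powers Theorem \ref{thm rateless}. An equivalent route would be to project Corollary \ref{process converge distribution} onto its $c$-th coordinate, since coordinate projections $\mathbb{R}^{\mathbb{N}} \to \mathbb{R}$ are continuous in the product topology; the continuous mapping theorem then gives the one-dimensional marginal convergence $B(c,n)/\sqrt{2n} \Rightarrow \sqrt{T_c}$, after which the same $\sqrt{2}$ rescaling finishes the argument. Either way, the proof is only a few lines, and amounts to extracting one coordinate of a convergence already in hand.
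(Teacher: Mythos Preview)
Your proposal is correct and matches the paper's approach exactly: the paper presents this corollary without proof, introducing it as ``a result even weaker than Corollary~\ref{process converge distribution},'' i.e., simply the $c$-th marginal of the process-level distributional limit already established. Both of the routes you sketch (projecting Corollary~\ref{process converge distribution}, or combining \eqref{marginal converge c} with the distributional identification in Theorem~\ref{thm embed}) are precisely what the paper intends.
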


The distribution of $2 T_c$ is identical to the distribution of the sum of the squares of $2c$ standard normal random variables, and is well known as the Chi-squared distribution with $2c$ degrees of freedom, or the Gamma distribution with shape parameter $c$ and scale parameter 2.  The distribution of $\sqrt{2 T_c}$ is 
known as the Chi distribution with $2c$ degrees of freedom.  This distribution, although not as famous as the Chi-squared distribution, appears naturally also in the tridiagonalization of random symmetric matrices, see \cite[p.~79]{Trotter}.  It can be viewed as a generalized gamma distribution as in \cite[\S17.8.7]{dist}.

\begin{rmk} \label{AGG}
One could count collisions in an alternative way: 
the number of collisions is the sum, over $1 \le i < j \le b$, of the indicator that balls $i$ and $j$ land in the same bin, with the overall effect that a bin containing $k$ balls contributes $\binom{k}{2}$ to the total number of collisions, and the expected number of collisions is $\binom{b}{2}/n$.   This method of counting lends itself to Poisson approximation; see for example \cite[p.~408]{AGG},  and as long as $b=o(n^{2/3})$, the difference between the two method of counting may be considered as an error term, leading to an alternate proof of Corollary \ref{process converge distribution}.
\end{rmk}

\subsection{Theorem giving almost sure asymptotics}

The next theorem \emph{should} be paraphrased as ``Almost surely, if $c \to \infty$ with $c=o(n)$, then $B(c,n) \sim \sqrt{2 c n}$.''  The slightly sloppy paraphrase,  ``If $c \to \infty$ with $c=o(n)$, then $B(c,n) \sim \sqrt{2 c n}$ a.s.''
is a weaker statement, since different sequences $c_1,c_2,\ldots$ might have different null sets, and it is not easy to name a countable collection of sequences which cover all the sequences having $c_n \to \infty$ and $c_n/n \to 0$. 

\begin{theorem}\label{thm asymptotic}
Under the coupling given by Theorem \ref{thm embed},
the good event \mbox{$G = \{ \lim_{c\to\infty} T_c/c = 1 \}$} has probability $1$. For all $\omega \in G$, 
for any sequence $c_1,c_2,\dots$ of positive integers such that  $c_n \to \infty$ and $c_n/n \to 0$, we have
\begin{equation}\label{almost sure asymptotic}
    \frac{B(c_n,n)}{\sqrt{2 c_n n}}  \to  1.
\end{equation}
\end{theorem}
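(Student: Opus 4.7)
The plan is to first confirm $\P(G)=1$: under the coupling, the inter-arrival times $T_c - T_{c-1}$ are i.i.d.\ $\mathrm{Exp}(1)$, so the strong law of large numbers gives $T_c/c \to 1$ almost surely.

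Now fix $\omega \in G$ and any integer sequence $(c_n)$ with $c_n \to \infty$ and $c_n/n \to 0$. Write $i_n := B(c_n,n) - c_n = J(c_n,n)$; the goal is $i_n \sim \sqrt{2 c_n n}$, which suffices because $c_n/\sqrt{c_n n} = \sqrt{c_n/n} \to 0$, and therefore $B(c_n,n) = i_n + c_n \sim \sqrt{2 c_n n}$. The strategy is to squeeze $i_n$ using the deterministic estimate \eqref{combined bound}, and the crux is to first prove $i_n < n/2$ eventually, then to show the error terms there are $o(c_n)$.

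For the crude upper bound, I would combine the left inequality in \eqref{star} with $a(i,n) \ge (i-1)^2/(2n)$ to obtain $(i_n-1)^2 \le 2 n T_{c_n}$. Since $\omega \in G$ forces $T_{c_n}/c_n \to 1$, this yields
\[
i_n \le 1 + \sqrt{2 n T_{c_n}} = (1+o(1))\sqrt{2 c_n n} = o(n),
\]
so $i_n < n/2$ for every $n$ larger than some threshold depending on $\omega$. From that point on, \eqref{combined bound} applies, and plugging the crude bound $i_n = O(\sqrt{c_n n})$ into its three error terms gives
\[
\frac{i_n}{2n} = O\!\left(\sqrt{c_n/n}\right), \qquad \frac{i_n^3}{3n^2} = O\!\left(\frac{c_n^{3/2}}{\sqrt{n}}\right), \qquad \frac{2 c_n i_n}{n} = O\!\left(\frac{c_n^{3/2}}{\sqrt{n}}\right).
\]
Dividing each by $c_n$ produces $O(1/\sqrt{c_n n})$ and $O(\sqrt{c_n/n})$, both of which tend to $0$ under the hypotheses. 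Hence $i_n^2/(2n) = T_{c_n} + o(c_n) = c_n(1 + o(1))$, giving $i_n \sim \sqrt{2 c_n n}$ as required.

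The main obstacle, or rather the essential constraint, is the $i_n^3/n^2$ error term in \eqref{combined bound}: with $i_n$ of order $\sqrt{c_n n}$, this error is of order $c_n^{3/2}/\sqrt{n}$, which becomes comparable to the leading $T_{c_n} \sim c_n$ precisely when $c_n$ is of order $n$. This is why the theorem requires $c_n = o(n)$, and why a more delicate tool (the duality/R\'enyi approach of later sections) becomes necessary for the regime $c_n \asymp n$. The payoff of the present embedding is that, once $\omega \in G$ is fixed, the entire estimate is deterministic and therefore applies \emph{uniformly} to every admissible sequence, which is what elevates the conclusion from a per-sequence a.s.\ statement to a sure statement on the set $G$.
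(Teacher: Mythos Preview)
Your proof is correct and follows essentially the same approach as the paper: both argue $\P(G)=1$ via the strong law of large numbers, use the left half of \eqref{star} to get the crude bound $(i_n-1)^2 \le 2nT_{c_n}$ forcing $i_n = o(n)$, then apply \eqref{combined bound} and check that each of the three error terms is $o(c_n)$, concluding $i_n^2/(2n) \sim c_n$. Your additional commentary on why the $i_n^3/n^2$ term dictates the $c_n = o(n)$ hypothesis is accurate and matches the paper's broader narrative.
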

\begin{proof}
 $\p(G)=1$ by the strong law of large numbers. 
Write $c$ for $c_n$ and $i=(B(c,n)-c)$.   For $\omega \in G$, the relation   
$$
   (i-1)^2  \le 2n T_c
$$
with $T_c \sim c = o(n)$ implies that $i^2 = O(nc) = o(n^2)$, hence $i=o(n)$.   Hence for sufficiently large $n$ (depending on the choice of $\omega \in G$),
\eqref{combined bound} applies, giving:
\begin{equation}\label{little oh}
    \left| \frac{(B(c,n)-c)^2}{2n} - T_c  \right| \le       \frac{i}{2n} + \frac{i^3}{3n^2} + \frac{2ci}{n}   = o(c).
\end{equation}
The equality 
is justified term-by-term, where the argument for the middle  term is that $i^3/n^2 = (i^2/(nc) ) \ (i/n) \ c = O(1) \
o(1)\ c = o(c)$.  Using $\omega \in G$, \eqref{little oh} implies that $i^2/(2n) \sim c$, equivalently
$i^2 \sim 2nc$, equivalently $B(c,n)-c \sim \sqrt{2nc}$.   Finally, since $ c=o(\sqrt{2nc})$, this implies that
$B(c,n) \sim \sqrt{2nc}$.
\end{proof}

\subsection{Theorem giving almost sure uniform convergence}

\begin{theorem}\label{thm uniform}
Under the coupling given by Theorem \ref{thm embed},
the good event $H = \{ T_c \asymp c \}$, that the $T_c$-to-$c$ ratio is bounded away from zero and infinity, 
has probability $1$.   For all $\omega \in H$, 
there is uniform convergence, as given by the following:
\begin{equation}\label{1/3 converge}  
  \sup_{c=o(n^{1/3})} \left| \frac{B^2(c,n)}{2n} - T_c \right|  \ \to 0; 
\end{equation}
\begin{equation}\label{1/2 converge}   
  \sup_{c=o(n^{1/2}) } \left| \frac{B(c,n)}{\sqrt{2n}} - \sqrt{T_c} \right|  \ \to 0;  
\end{equation}
for any $C < \infty$ and $\alpha \in (0,1/3)$,
\begin{equation}\label{alpha 1/3 converge}  
  \sup_{c \le Cn^\alpha} \left| \frac{B^2(c,n)}{2n} - T_c \right|  = O( n^{(3\alpha-1)/2});
\end{equation}
and
for any $C < \infty$ and $\alpha \in (0,1)$,
\begin{equation}\label{alpha  bound}  
 \  \sup_{c \le Cn^\alpha} \left| \frac{B(c,n)}{\sqrt{2n}} - \sqrt{T_c} \right|  = O( n^{\alpha-(1/2)}).  
\end{equation}

\end{theorem}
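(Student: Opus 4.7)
The plan is to deduce all four bounds from a single deterministic calculation starting from \eqref{combined bound}, once $\omega$ has been restricted to the good event $H$. That $\p(H) = 1$ follows from the strong law of large numbers applied to the interarrival gaps of the standard Poisson process: $T_c/c \to 1$ almost surely, so on the convergence event this ratio is eventually trapped in, say, $[1/2, 2]$, while the initial finite segment is bounded since each $T_c$ lies in $(0,\infty)$. Hence on $H$ there exist $\omega$-dependent constants $0 < K_1 \le K_2 < \infty$ with $K_1 c \le T_c \le K_2 c$ for all $c \ge 1$.

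Fix $\omega \in H$ and write $i := B(c,n) - c$. The first half of \eqref{star} together with $a(i,n) \ge a_1(i,n) = i(i-1)/(2n)$ gives $i \le 1 + \sqrt{2nT_c} = O(\sqrt{nc})$, uniformly in $c \ge 1$ with $\omega$-dependent constant. Since $c \mapsto B(c,n) - c$ is nondecreasing (each additional collision requires at least one further toss, so $B(c+1,n) \ge B(c,n)+1$), for $c \le Cn^\alpha$ with $\alpha < 1$ we have $\sup_c i = O(n^{(1+\alpha)/2}) = o(n)$, so $i < n/2$ throughout the supremum for $n$ sufficiently large (depending on $\omega$), which legitimizes the uniform application of \eqref{combined bound}. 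Expanding $B^2(c,n) = (i+c)^2$ and invoking \eqref{combined bound} yields the pointwise bound $i/(2n) + i^3/(3n^2) + 3ci/n + c^2/(2n)$; substituting $i = O(\sqrt{nc})$ produces the master bound
\[
  \left|\frac{B^2(c,n)}{2n} - T_c\right| \;=\; O\!\left(\sqrt{c/n} \,+\, c^{3/2}/\sqrt{n} \,+\, c^2/n\right).
\]
For $c \le Cn^\alpha$ with $\alpha < 1/3$, the middle term $c^{3/2}/\sqrt{n} = O(n^{(3\alpha-1)/2})$ dominates the other two (the relevant inequalities $(\alpha-1)/2 < (3\alpha-1)/2$ and $2\alpha - 1 \le (3\alpha-1)/2$ reduce to $\alpha > 0$ and $\alpha \le 1$ respectively), giving \eqref{alpha 1/3 converge}; and for any $c$ with $c = o(n^{1/3})$ each of the three terms is $o(1)$, giving \eqref{1/3 converge}.

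For the square-root statements, use the identity $|\sqrt{x} - \sqrt{y}| = |x - y|/(\sqrt{x} + \sqrt{y})$ with $x = B^2(c,n)/(2n)$ and $y = T_c$, combined with $\sqrt{x} + \sqrt{y} \ge \sqrt{T_c} \ge \sqrt{K_1 c}$ on $H$. Dividing the master bound by $\sqrt{c}$ yields $O(1/\sqrt{n} + c/\sqrt{n} + c^{3/2}/n)$; for $c \le Cn^\alpha$ with $\alpha \in (0,1)$ the middle term $c/\sqrt{n} = O(n^{\alpha - 1/2})$ dominates (the inequality $\alpha - 1/2 \ge 3\alpha/2 - 1$ is equivalent to $\alpha \le 1$), proving \eqref{alpha bound}, and \eqref{1/2 converge} is the corresponding $o(1)$ statement for $c = o(n^{1/2})$. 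The main technical point throughout is the uniform justification $i < n/2$ across the full supremum range, which is secured by the monotonicity of $i$ in $c$ and the almost sure bound $T_c = O(c)$; the remainder is careful term-by-term comparison.
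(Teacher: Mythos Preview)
Your proof is correct and follows essentially the same route as the paper: both start from \eqref{star} to get the uniform bound $i = B(c,n)-c = O(\sqrt{nc})$ on $H$, justify $i < n/2$ throughout the supremum range, feed this into \eqref{combined bound}, and then pass from $(B-c)^2$ to $B^2$ by expanding the square. The only notable difference is in the square-root step: the paper carries out a Taylor-type perturbation $\sqrt{c+d}-\sqrt{c} \sim d/(2\sqrt{c})$, whereas you use the identity $|\sqrt{x}-\sqrt{y}| = |x-y|/(\sqrt{x}+\sqrt{y})$ together with the lower bound $\sqrt{T_c} \ge \sqrt{K_1 c}$ available on $H$; your route is slightly cleaner and avoids the asymptotic bookkeeping.
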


\begin{proof}
Observe that in \eqref{combined bound}, each ingredient in the upper bound, $i=B(c,n)(\omega)-c= J(c,n)(\omega)$, and $c$ itself, is a nondecreasing function of $c$.  So immediately, we also have the stronger uniform statement
$   \text{if }  i := B(c,n)-c \text{ satisfies } i<n/2,   \text{ then }   $
\begin{equation}\label{combined bound uniform}
  \sup_{1 \le c' \le c} \left| \frac{(B(c',n)-c')^2}{2n} - T_{c'}  \right|  \le \frac{i}{2n} + \frac{i^3}{3n^2} + \frac{2ci}{n}.
\end{equation}

The exact meaning of \eqref{1/3 converge} is:  
for any sequence $c_1,c_2,\ldots$, such that $c_n/n^{1/3} \to 0$,
\begin{equation}\label{1/3 converge precise}  
\text{ for all $\omega \in H,$} \quad \lim_{n\to\infty} \sup_{1 \le c' \le c_n} \left| \frac{B^2(c',n)}{2n} - T_c' \right|  = 0 .
\end{equation}
We may assume that $c_n \to \infty$, for if $\sup c_n < \infty$, then \eqref{1/3 converge precise} holds simply as a corollary of \eqref{marginal converge c} in Theorem \ref{thm rateless}.

Write $c=c_n$ and $i = B(c,n)-c$.
Using the first half of \eqref{star}, $a_1(i,n) \le a(i,n) \le T_c$ so
$$
   (i-1)^2  \le 2n T_c,
$$
hence $i =  O(\sqrt{nc}) =o(n)$ as $n \to \infty, $  for every $\omega \in H$.  For sufficiently large $n$  (depending on $\omega$), $i < n/2$, so the upper
bound in \eqref{combined bound uniform}  applies, and 
\begin{equation}\label{big oh uniform}
\left| \frac{(B(c',n)-c')^2}{2n} - T_{c'}  \right| \le       \frac{i}{2n} + \frac{i^3}{3n^2} + \frac{2ci}{n}  =: u  = O(\sqrt{c^3/n}) = o(1),
\end{equation}
   for $1 \le c' \le c$,
noting that each of  $i^3/n^2$ and  $ci/n$  just satisfies the $O(\sqrt{c^3/n})$ relation.   

Once $n$ is large enough that the upper bound $u$ in \eqref{big oh uniform} is less than 1, and large enough so that $T_c \le Uc$ for some constant $U$, we have, for all $1 \le c' \le c=c_n$,
$$
 \frac{(B(c',n)-c')^2}{2n} \le  T_c + 1 \le Uc + 1 =: t,
$$ 
which implies that $(B(c',n)-c')^2 \le 2nt$, hence $B(c',n) \le \sqrt{2nt} + c' \le \sqrt{2nt}+c$, hence expanding $(B(c',n)-c')^2$ and using the triangle inequality,
\begin{eqnarray}
\left| \frac{(B(c',n))^2}{2n} - T_{c'}  \right| & \le & \frac{2c' B(c',n)+c'^2}{2n} +  \left| \frac{(B(c',n)-c')^2}{2n} - T_{c'}  \right| \nonumber \\  
   & \le &   \frac{2c( \sqrt{2nt}+c)+c^2}{2n}+  u   = O( \sqrt{c^3/n}) \label{big oh uniform 2}
\end{eqnarray}
   for $1 \le c' \le c$.
This completes the proof of \eqref{1/3 converge}, and simultaneously proves \eqref{alpha 1/3 converge}.
We note  that \eqref{K big oh 2} below will provide an alternate proof of \eqref{1/3 converge} and \eqref{alpha 1/3 converge}, without making use of the uniformity in \eqref{combined bound uniform}.

Next, we prove \eqref{1/2 converge}.
Consider the random variables $L :=\inf_{c \ge 1} T_c/c, \ U := \sup_{c \ge 1} T_c/c$, so  that $\omega \in H$ is precisely that $0 < L(\omega) \le U(\omega)  < \infty$.  For every $c$, writing $i=B(c,n)-c$, the first half of \eqref{star}, that $a_1(i,n) \le a(i,n) \le T_{c}$, yields $   (i-1)^2  \le 2n T_{c},$
so $\omega \in H$ yields the further $(i-1)^2 \le 2 n U c   < \infty$, 
hence there is a random $K(\omega) < \infty$ such that for all $n,c \ge 1$, $i := B(c,n)-c \le K \sqrt{nc}$.   Note that this ``big Oh" conclusion is weaker than the asymptotic in \eqref{almost sure asymptotic},  but stronger in that it requires no condition on the growth of $c$ relative to $n$.
Next, for any $\varepsilon > 0$,  imposing the  growth condition  $c \le n^{1-\varepsilon}$,  the relation $i := B(c,n)-c \le K \sqrt{nc}$ implies that there exists $n_0(\omega)< \infty$ such that for all $n > n_0$, for all $c \le n^{1-\varepsilon} $, $i := B(c,n)-c \le n/2$, enabling \eqref{combined bound}, which we further bound using $i \le K \sqrt{nc}$:
\begin{equation}\label{K big oh}
    \left| \frac{(B(c,n)-c)^2}{2n} - T_c  \right| \le       \frac{i}{2n} + \frac{i^3}{3n^2} + \frac{2ci}{n}  \le 2K^3(\sqrt{c/n}+\sqrt{c^3/n})
    \le 4K^3 \sqrt{c^3/n} .
\end{equation}
Next, similar to \eqref{big oh uniform 2}, we expand $(B(c,n)-c)^2$ and apply the triangle inequality;  further using $B(c,n) \le c+K \sqrt{nc}$  hence $2 c B(c,n)+c^2  \le 3c^2 + 2Kc \sqrt{nc}$ yields:  for $\omega \in H$, for $n \ge n_0(\omega)$, for all $c \le n^{1-\varepsilon}$,
\begin{equation}\label{K big oh 2}
    \left| \frac{(B(c,n))^2}{2n} - T_c  \right|   \le 4K^3 \sqrt{c^3/n} + \frac{3c^2 + 2Kc \sqrt{nc}}{2n} \le \frac{3c^2}{2n}+ K' \sqrt{c^3/n} = O(\sqrt{c^3/n}),
\end{equation}
and we have made the random implied constant in the big Oh more or less explicit.

What is the effect of applying square root, to both $B^2(c,n)/(2n)$ and to $T_c$, in \eqref{K big oh 2}?  Suppose that $c=n^\alpha$ with $\alpha \in (0,1)$;  the  random $T_c$ is of order $T_c \asymp c = n^\alpha$,  and suppose  the amount of perturbation, $d := B^2(c,n)/(2n) - T_c$, is of order $d \asymp \sqrt{c^3/n} = n^{(3\alpha-1)/2}$.   This situation has $d/c = n^{(\alpha-1)/2}$, which is $o(1)$ provided $\alpha<1$.   Using $d=o(c)$, we have
$\sqrt{c+d}= \sqrt{c} \sqrt{1+d/c}  = \sqrt{c} (1 + d/(2c) + O(d^2 / c^2))$, leading to $\sqrt{c+d}-\sqrt{c} \sim \sqrt{c} \, d/(2c) = d/(2\sqrt{c}) \asymp d/\sqrt{c} \asymp n^{(3\alpha-1)/2}/n^{\alpha/2} = n^{\alpha - \frac{1}2}$.   The combination of this calculation, with the uniform upper bound \eqref{K big oh 2}, together with $\omega \in H$, proves both \eqref{1/2 converge} and \eqref{alpha bound}.  
 \end{proof}

\section{Uniform integrability in the uncentered case, $c = O(n)$}\label{sect ui}

\subsection{Uniform integrability}
Lemma \ref{crucial} gives the crucial estimate, and Proposition
\ref{UI} establishes uniform integrability, as required in the proof of Corollary \ref{cor fixed c}, to get limit moments from the distributional convergence proved in 
Corollary \ref{marginal converge distribution}.

\begin{lem} \label{crucial}
Fix $0 < K < \infty$.  For $n=1,2,\ldots,$ for all $ c$ with $ 1 \le c \le Kn$, for all $t>\max(8K,44)$,
\[
\P(B(c,n)/\sqrt{2cn}> \sqrt{t}) \le ce^{-ct/8}.
\]
\end{lem}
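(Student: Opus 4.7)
The plan is to translate the tail of $B(c,n)$ into a tail bound on $T_c$ using the embedding, and then apply a Chernoff inequality for the Gamma distribution. The three steps I would carry out are (i) deriving a deterministic upper bound $B(c,n) \le c + 1 + \sqrt{2nT_c}$ from the coupling, (ii) using $c \le Kn$ and $t > 8K$ to reduce the event $\{B(c,n)/\sqrt{2cn} > \sqrt{t}\}$ to $\{T_c > ct/4\}$, and (iii) Chernoff-bounding the Gamma tail using $t > 44$.

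For (i), combining \eqref{star} with $a_1(i,n) = i(i-1)/(2n)$ and $a_1(i,n) \le a(i,n) \le T_c$ gives, for $i := B(c,n) - c$, the inequality $(i-1)^2 \le i(i-1) \le 2nT_c$, which rearranges to $B(c,n) \le c + 1 + \sqrt{2nT_c}$. For (ii), dividing by $\sqrt{2cn}$ and using $c + 1 \le 2c$ together with $c/n \le K$ gives
\[
\frac{B(c,n)}{\sqrt{2cn}} \;\le\; \sqrt{\frac{2c}{n}} + \sqrt{\frac{T_c}{c}} \;\le\; \sqrt{2K} + \sqrt{\frac{T_c}{c}}.
\]
Since $t > 8K$ forces $\sqrt{2K} < \sqrt{t}/2$, the event $\{B(c,n)/\sqrt{2cn} > \sqrt{t}\}$ is contained in $\{T_c > ct/4\}$, and so $\P(B(c,n)/\sqrt{2cn} > \sqrt{t}) \le \P(T_c > ct/4)$.

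For (iii), $T_c$ is Gamma($c$, $1$) with moment generating function $\E e^{\lambda T_c} = (1-\lambda)^{-c}$ for $\lambda < 1$. The nearly-optimal choice $\lambda = 1 - 4/t$ yields
\[
\P(T_c > ct/4) \;\le\; (t/4)^c\, e^{c - ct/4} \;=\; c\, e^{-ct/8} \cdot \frac{1}{c}\exp\bigl(c(\ln(t/4) + 1 - t/8)\bigr).
\]
The hypothesis $t > 44$ comfortably ensures $\ln(t/4) + 1 \le t/8$, so the correction factor is at most $1/c \le 1$ for $c \ge 1$, yielding the claim. The true break-even point in (iii) is closer to $t \approx 22$, so the stated threshold $44$ carries generous slack, and I do not foresee any serious obstacle: each step is a short direct calculation, and the only delicacy is bookkeeping the constants so that the single polynomial factor of $c$ on the right absorbs the sub-exponential error in the Chernoff estimate.
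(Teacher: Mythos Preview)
Your proof is correct, and it takes a genuinely different route from the paper's. The paper argues combinatorially through duality: with $b=\lfloor\sqrt{2nct}\rfloor$ it writes $\P(B(c,n)>b)=\P(C(b,n)\le c-1)$, bounds each $\P(C(b,n)=y)$ by an explicit count of the form $\binom{b}{y}b^y(n)_{b-y}/n^b$, and then pushes through the resulting exponential estimate term by term. Your argument instead exploits the coupling of Theorem~\ref{thm embed}: the deterministic inequality $(i-1)^2\le 2nT_c$ from \eqref{star} converts the tail of $B(c,n)$ into a tail of the Gamma variable $T_c$, after which a one-line Chernoff bound finishes the job. Your approach is shorter and makes transparent use of the embedding machinery already built in Section~\ref{sect embed}; the paper's approach is self-contained (it does not invoke the coupling at all) and would survive in settings where the Poisson embedding is unavailable. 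Both need the numerical hypothesis $t>44$ for essentially the same reason---to absorb a logarithmic term into $t/8$---and both use $t>8K$ to control the contribution of $c$ relative to $\sqrt{cn}$.
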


\begin{proof}
The  restriction $t \ge 44$ implies that $t/8 \ge \log(2t)+1$.  The restriction $1 \le c \le Kn$ implies $2(c+1)^2/(nc)  \le 2(2c)^2/(nc) = 8c/n \le 8K$, so combined with $t \ge 8K$  we have $2(c+1)^2/(nc)  \le t$, so $(c+1)^2 \le ntc/2$.

Put $b$ for the floor of $\sqrt{2nct}$, so:
\[
\P(B(c,n)  /\sqrt{2cn}> \sqrt{t}) = \P(B(c,n) > b).
\]
Using \eqref{define B}, this equals $\P(C(b,n) \le c-1)$, where $C(b,n)$ is the random variable denoting the number of collisions obtained after throwing $b$ balls.  The event $C(b,n) = y$ entails partitioning the set of $b$ balls into $b - y$ blocks, i.e., disjoint nonempty subsets.  For $y = 0, 1, \ldots, c-1$, we have
\[
\P(C(b,n) = y) \le \binom{b}{y} b^y \frac{(n)_{b-y}}{n^b}
\]
where the binomial coefficient is for choosing which $y$ of the $b$ balls were \emph{colliders} (i.e., landed in a non-empty bin), the $b^y$ term overcounts which ball each of the colliders collided with, $(n)_{b-y}$ describes the assignment of the $b-y$ blocks to bins, and there are $n^b$ possible throws.

We substitute $\binom{b}{y} \le b^y/y!$ and $b^2 \le nt^2$ to obtain:
\[
\P(C(b,n) = y) \le \frac{t^{2y}}{y!} \frac{(n)_{b-y}}{n^{b-y}}.
\]
The second fraction is bounded above by $\exp(-\binom{b-y}2/n)$.  Note that by our hypothesis on $t$, $c+1 < t\sqrt{n/2}$, hence
\[
b-y-1 = (b+1) - (y+1)-1 \ge t\sqrt{n} - c - 1 \ge t\sqrt{n}/2
\]
and $\binom{b-y}2 \ge nt^2/8$.  Omitting the $1/y!$ term gives
\[
\P(C(b,n) = y) \le t^{2y} \exp(-nt^2/8) \le \exp(-(t^2/8 - 2y\log(t))).
\]
As $t^2/8 - 2y\log(t) \ge t^2/8 - 2c\log(t) \ge t^2/16$, it follows that $\P(C(b,n) < c) \le ce^{-t^2/16}$, proving the claim.
\end{proof}

\begin{lem}\label{UI}
Fix $K < \infty$.  For $k=1,2,\ldots$, the family 
\[
\left\{ \left(\frac{B(c, n)}{\sqrt{cn}} \right)^k : n \ge 1, c \le Kn \right\}
\] 
is uniformly integrable.
\end{lem}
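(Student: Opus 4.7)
\medskip

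\textbf{Plan.} My plan is to reduce uniform integrability to a uniform moment bound and then extract that bound directly from the tail estimate in Lemma \ref{crucial}. By de la Vall\'ee Poussin (equivalently, by Cauchy--Schwarz followed by Markov), a family is uniformly integrable as soon as it is bounded in $L^p$ for some $p > 1$. So it suffices to prove that for every fixed $k \ge 1$,
\[
\sup_{n \ge 1,\ 1 \le c \le Kn} \E\!\left[\left(\frac{B(c,n)}{\sqrt{cn}}\right)^{\!2k}\right] < \infty,
\]
since doubling the exponent $k$ corresponds to taking $\Phi(x) = x^2$ in the criterion applied to $Y_k := (B(c,n)/\sqrt{cn})^k$. (The $c = 0$ case is vacuous as $B(0,n) = 0$.)

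\textbf{Key steps.} Write $Z = Z_{c,n} := B(c,n)/\sqrt{2cn}$, so that Lemma \ref{crucial} furnishes
\[
\P(Z > \sqrt t) \le c\, e^{-ct/8} \qquad \text{for all } t > t_0 := \max(8K, 44).
\]
First, apply the layer-cake identity to pass from tail probabilities to moments:
\[
\E[Z^{2k}] \;=\; k \int_0^\infty t^{k-1}\, \P(Z > \sqrt t)\, dt.
\]
Second, split the integral at $t_0$. On $[0, t_0]$ use the trivial bound $\P \le 1$, contributing at most $t_0^k$. On $[t_0, \infty)$ insert the tail estimate and make the change of variables $s = ct/8$:
\[
k c \int_{t_0}^\infty t^{k-1}\, e^{-ct/8}\, dt
\;=\; k\, \frac{8^k}{c^{k-1}} \int_{ct_0/8}^\infty s^{k-1}\, e^{-s}\, ds
\;\le\; k!\, 8^k\, c^{\,1-k},
\]
which is at most $k!\, 8^k$ since $c \ge 1$ and $k \ge 1$. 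Combining gives $\E[Z^{2k}] \le t_0^k + k!\,8^k$, a bound independent of $n$ and of $c \in [1, Kn]$. Undoing the $\sqrt 2$ rescaling yields the desired uniform bound on $\E[Y_k^2]$, and uniform integrability of $\{Y_k\}$ follows.

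\textbf{Main obstacle.} The only subtle point is the prefactor $c$ in Lemma \ref{crucial}, which would otherwise blow up along the diagonal $c \asymp n$. The substitution $s = ct/8$ is exactly what defuses this: integrating against $t^{k-1}$ against $e^{-ct/8}$ produces a gain of $c^{-k}$, whose product with the prefactor $c$ leaves $c^{1-k} \le 1$. So, although the constant $K$ governs the threshold $t_0$ through the validity range of Lemma \ref{crucial}, it does not affect the final polynomial-in-$c$ balance, and the bound is genuinely uniform over the stated regime.
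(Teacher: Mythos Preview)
Your proof is correct and follows essentially the same approach as the paper: use the tail bound of Lemma~\ref{crucial} to get a uniform bound on higher moments (the paper bounds the $(k+1)$-st moment, you bound the $2k$-th), and conclude uniform integrability from that. The paper's own proof is a two-line sketch that omits the layer-cake computation and does not explain how the prefactor $c$ in the tail bound is absorbed; your substitution $s = ct/8$ makes this step explicit and is a genuine clarification.
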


\begin{proof}
The exponentially decaying uniform upper bound on the upper tail, given by Lemma \ref{crucial}, implies that  for each $k=1,2,\ldots,    \sup_n \sup_{ c \le Kn}  \e (B(c,n)/\sqrt{cn})^{k+1} < \infty$.  This uniform boundedness of the
$(k+1)$-st moments implies uniform integrability of the family  of $k$-th powers.
\end{proof}

\section{Moments in the uncentered case, $c = o(n)$}

\subsection{Corollaries of convergence together with uniform integrability}

\begin{cor}\label{cor fixed c}
For each fixed $c$, and for $k=1,2,\ldots$
\begin{equation}\label{fixed c moments}
   \e [B(c,n)^k]  \sim (2n)^{k/2} \, \e [T_c^{k/2}]     \text{ as } n \to \infty ,
\end{equation} 
where $T_c$ is the time of  the $c$-th arrival in the standard rate 1  Poisson process.
\end{cor}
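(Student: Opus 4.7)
The plan is to combine the two inputs that the section has just assembled: the distributional limit from Corollary \ref{marginal converge distribution} and the uniform integrability from Lemma \ref{UI}. Recall the standard fact that if $X_n \Rightarrow X$ and the family $\{|X_n|^k\}_n$ is uniformly integrable, then $\E[X_n^k] \to \E[X^k]$. The statement of the corollary is exactly what this principle yields once we rescale by $\sqrt{2n}$.

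First I would set $X_n := B(c,n)/\sqrt{n}$. Corollary \ref{marginal converge distribution} says $X_n \Rightarrow \sqrt{2T_c}$. By the continuous mapping theorem (the map $x \mapsto x^k$ on $[0,\infty)$ is continuous), it follows that $X_n^k \Rightarrow (2T_c)^{k/2}$. Next, because $c$ is fixed, any constant $K \ge c$ works in Lemma \ref{UI} for all $n \ge 1$, so the family $\{(B(c,n)/\sqrt{cn})^k : n \ge 1\}$ is uniformly integrable; since $c$ is a fixed constant, the family $\{X_n^k : n \ge 1\} = \{c^{k/2} (B(c,n)/\sqrt{cn})^k\}$ is uniformly integrable as well.

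Combining the distributional convergence with uniform integrability yields
\[
\E[X_n^k] \;\longrightarrow\; \E\bigl[(2T_c)^{k/2}\bigr] \;=\; 2^{k/2}\,\E[T_c^{k/2}].
\]
Multiplying through by $n^{k/2}$ gives the asserted asymptotic
\[
\E[B(c,n)^k] \sim (2n)^{k/2}\,\E[T_c^{k/2}],
\]
and since $T_c$ has a Gamma$(c,1)$ distribution, $\E[T_c^{k/2}] = \Gamma(c+k/2)/\Gamma(c) < \infty$, so the limit is finite and the asymptotic is meaningful.

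There is essentially no serious obstacle: the real content of the corollary has already been absorbed into Corollary \ref{marginal converge distribution} and Lemma \ref{UI}. The only point requiring a moment's care is verifying that Lemma \ref{UI}'s normalization $(B(c,n)/\sqrt{cn})^k$ transfers to the normalization $(B(c,n)/\sqrt{n})^k$ needed here, but this is immediate once $c$ is fixed and absorbed as a multiplicative constant.
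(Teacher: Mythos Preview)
Your proposal is correct and follows exactly the same approach as the paper's proof, which simply says to combine Corollary \ref{marginal converge distribution} with Lemma \ref{UI}. You have filled in the routine details (continuous mapping, absorbing the fixed constant $c$ into the normalization, finiteness of the limiting moment) that the paper leaves implicit.
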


\begin{proof}
Combine the  one-dimensional distribution convergence in Corollary \ref{marginal converge distribution} with the uniform integrability in
Lemma \ref{UI}.
\end{proof}

\begin{cor}\label{cor c grows}
For $c=c_n$ with $c_n \to \infty$ and $c_n/n \to 0$, and for $k=1,2,\ldots$, 
\[
\e [B(c_n,n)^k] \sim (2 n c_n)^{k/2}   \text{ as } n \to \infty
.
\]
\end{cor}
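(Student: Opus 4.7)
The plan is to mirror the proof of Corollary~\ref{cor fixed c}: combine an almost-sure convergence statement with a uniform integrability estimate to obtain convergence of moments. The only change from Corollary~\ref{cor fixed c} is that the distributional input must be Theorem~\ref{thm asymptotic} (which handles sequences $c_n \to \infty$ with $c_n/n \to 0$) rather than Corollary~\ref{marginal converge distribution} (which handles fixed $c$).

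First, because $c_n/n \to 0$, there is an $n_0$ beyond which $c_n \le n$. This lets me invoke Lemma~\ref{UI} with $K = 1$ to conclude that the family
\[
\left\{ \left( \frac{B(c_n,n)}{\sqrt{c_n n}} \right)^k : n \ge n_0 \right\}
\]
is uniformly integrable, simply as a subfamily of a uniformly integrable family. Dropping finitely many initial terms does not affect uniform integrability, so the whole sequence is uniformly integrable.

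Next, Theorem~\ref{thm asymptotic} says that on the probability-one event $G = \{\lim_{c \to \infty} T_c/c = 1\}$, one has $B(c_n,n)/\sqrt{2 c_n n} \to 1$. Raising to the $k$-th power (a continuous operation on the nonnegative reals) preserves almost sure convergence, so
\[
\left( \frac{B(c_n,n)}{\sqrt{c_n n}} \right)^k = 2^{k/2} \left( \frac{B(c_n,n)}{\sqrt{2 c_n n}} \right)^k \longrightarrow 2^{k/2}
\]
almost surely, and hence in distribution.

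Finally, uniform integrability upgrades distributional (equivalently, almost sure) convergence to convergence of expectations, yielding
\[
\e \left[ \left( \frac{B(c_n,n)}{\sqrt{c_n n}} \right)^k \right] \longrightarrow 2^{k/2},
\]
which, after multiplying through by $(c_n n)^{k/2}$, is exactly the claim. There is no real obstacle: all the substantive work was already carried out in Theorem~\ref{thm asymptotic} and Lemma~\ref{UI}, and this corollary is a short bookkeeping consequence.
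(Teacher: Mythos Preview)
Your proof is correct and follows essentially the same approach as the paper: combine the almost sure convergence from Theorem~\ref{thm asymptotic} with the uniform integrability from Lemma~\ref{UI}. Your version simply fills in a few routine details (choosing $K=1$, raising to the $k$-th power) that the paper leaves implicit.
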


\begin{proof}
Combine the almost sure limit in Theorem \ref{thm asymptotic} with the uniform integrability in
Lemma \ref{UI}.
\end{proof}

\subsection{Explicit asymptotics for the mean and variance of $B(c,n)$,  fixed $c$.}\label{sect gamma}
Define, for $c=1,2,\ldots$,
$$
   \gamma(c) := \frac{\e [T_c^{1/2}]}{\sqrt{c}},
$$
so that \eqref{fixed c moments}, specialized to $k=1$, may be paraphrased as
\begin{equation}\label{first asymptotic moment}
\e B(c,n) \sim \gamma(c)  \sqrt{2nc} \quad \text{for fixed $c=1,2,\ldots$}
\end{equation}
Recalling that $T_c$, the time of the $c$-th arrival in a standard Poisson process, has density  $f_c(x) = x^{c-1} e^{-x}/\Gamma(c)$ for $x >0$, we calculate explicitly   
\begin{equation}\label{gamma formula 0}
\gamma(c) = \Gamma(c + 1/2) /(\sqrt{c} \Gamma(c)).
\end{equation}
It is an exercise using the material in \cite[Chap.~1]{AAR} to show that $\gamma(c)$ is increasing.

Calculating explicitly $\gamma(c)$ for a few values of $c$ gives:
\[
\begin{array}{c|ccccccc} 
c&1&2&3&4&5&\cdots&\infty \\ \hline
\gamma(c) & \sqrt{\frac{\pi}4} & \sqrt{\frac{9\pi}{32}} & \sqrt{\frac{75\pi}{256}} & \sqrt{\frac{1225\pi}{4096}}&\sqrt{\frac{19845\pi}{65536}}&\cdots&1
\end{array}
\]
where the limit $\gamma(c) \to 1$ is the $k=1$ case of the well-known formula \[\lim_{c\to \infty} \frac{\Gamma\left(c + \frac{k}2\right)}{\Gamma(c) \, \sqrt{c}^k} = 1.\]
So $\gamma(c)$ increases from about $0.886$ to 1.  And $c$ need not be very big for $\gamma(c)$ to be close to 1; for $c \ge 13$, $\gamma(c) > 0.99$.  

Plugging numerators into Sloane's \cite{OEIS} gives a hit on 
sequence A161736, ``Denominators of the column sums of the BG2 matrix",
giving the formula 
\begin{equation}\label{gamma formula}
\gamma(c) = \frac{(2c)!}{2^{2c}(c!)^2} \sqrt{\pi c},
\end{equation}
whose verification from \eqref{gamma formula 0} is easy, by induction on $c$, starting from $\Gamma(3/2)= \sqrt{\pi}/2$.
Now Stirling's approximation can be applied to show how $\gamma(c)$ approaches 1 as $c$ grows:
\begin{equation} \label{cx}
\gamma(c) = 1 - \frac{1}{8c} + \frac{1}{128c^2} + \frac{5}{1024c^3} - \frac{21}{32768c^4} + O(1/c^5).
\end{equation}

For fixed $c$,  Corollary \ref{cor fixed c} also
gives the asymptotic variance.
\begin{cor} \label{var}
For constant $c$,
\begin{equation}\label{gamma formula var}
\lim_{n \to \infty}   \frac{\Var B(c,n)}{n} =  2c(1 - \gamma(c)^2).
\end{equation}
\end{cor}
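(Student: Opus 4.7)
The plan is to derive the variance asymptotic by combining the $k=1$ and $k=2$ cases of Corollary \ref{cor fixed c}, which gives us the first two moments of $B(c,n)$ simultaneously. Since $T_c$ is a Gamma random variable with shape $c$ and rate $1$, we have $\e[T_c] = c$ and, by the definition of $\gamma(c)$, $\e[T_c^{1/2}] = \gamma(c)\sqrt{c}$.

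Applying Corollary \ref{cor fixed c} with $k=1$ yields
\[
\e B(c,n) = \gamma(c)\sqrt{2nc}\,(1 + o(1)),
\]
and squaring gives
\[
(\e B(c,n))^2 = 2nc\,\gamma(c)^2 + o(n).
\]
Applying Corollary \ref{cor fixed c} with $k=2$ yields
\[
\e B(c,n)^2 = 2n\,\e[T_c]\,(1 + o(1)) = 2nc + o(n).
\]
Subtracting then gives $\Var B(c,n) = 2nc(1 - \gamma(c)^2) + o(n)$, and dividing by $n$ produces the claimed limit.

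The only potentially delicate point is ensuring that the error terms survive the subtraction: since $\gamma(c) \to 1$ as $c$ grows, the leading terms $2nc$ and $2nc\gamma(c)^2$ become close, but for \emph{fixed} $c$ the factor $1 - \gamma(c)^2$ is a strictly positive constant, so the $o(n)$ errors are genuinely negligible compared to the $\Theta(n)$ main term. Thus no refinement beyond the first-order asymptotics of Corollary \ref{cor fixed c} is needed, and the result follows in one step.
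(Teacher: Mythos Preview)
Your proof is correct and follows exactly the approach of the paper, which simply states that the result is immediate from taking $k=1$ and $k=2$ in \eqref{fixed c moments}; you have spelled out precisely those details. Your closing remark about the positivity of $1-\gamma(c)^2$ is a nice observation, though strictly speaking the limit statement would hold (trivially, with value $0$) even if that factor vanished, since all that is needed is that the two $o(n)$ errors remain $o(n)$ after subtraction.
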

\begin{proof}
The implication is immediate, by taking $k=1$ and $k=2$ in \eqref{fixed c moments}.
\end{proof}  
Calculating explicitly the expression  in \eqref{gamma formula var} for the first few values of $c$ gives

\[
\begin{array}{c|ccccccc} 
c&1&2&3&4&5&\cdots&\infty \\ \hline
\displaystyle\lim_{n\to\infty} \frac{\Var B(c,n)}{n} 
& 0.4292 
& 0.4567 
& 0.4777 
& 0.4835 
& 0.4869 
&\cdots&0.5
\end{array}
\]
and 
the 
limit of $(\Var B(c,n))/n$ has the series expansion
\[
\lim_{n\to \infty}\frac{\Var B(c,n)}n = \frac12 - \frac1{16c} - \frac1{64c^2} + \frac5{1024c^3} + \frac{23}{4096c^4} + 
\cdots .
\]
The last column of the table above means only that
\begin{equation} \label{var.limit}
  \lim_{c \to \infty}   \lim_{n\to \infty}\frac{\Var B(c,n)}n = \frac12.
\end{equation}

\section{Results for collisions, based on duality}\label{sect dual}
In the remainder of the paper, we find the asymptotic variance of $B(c,n)$ when $c=c_n \to \infty$ with $c/n \to \alpha_0 \in [0,\infty)$.
Our method is to combine duality  with R\'enyi's central limit result for the number of empty bins, to get a normal limit for $B(c,n)$ (this section), and to prove a concentration result to get uniform integrability (section \ref{UI.duality}), so that the normal limit governs the asymptotic variance (see section \ref{moments.duality}).

\subsection{History:  Weiss and R\'enyi}
Weiss in 1958, \cite{Weiss}, proved a central limit theorem for $N_0(b,n)$ in the ``central regime'', where 
$b,n \to \infty$ with $b \asymp n$, i.e., with ratio
bounded away from zero and infinity.  Weiss explicitly stated that $N_0$ is asymptotically normal, and implicit in this, together with his proof, is that the interpretation of asymptotic normality involves subtracting off the mean of $N_0$ and dividing by the standard deviation of $N_0$, i.e., Weiss proved that
$$
    \frac{N_0 - \e N_0}{\sqrt{\var N_0}} \Rightarrow Z.
$$ 
Renyi in 1962, \cite{Renyi:orig}, gave 3 proofs and went a little further.  He gave a nice explicit expression to approximate the mean and variance, and, in his Theorem 2, ``the third proof,'' extended the result to the case $b=o(n), b^2/n \to \infty$.  

For motivation, suppose that $b/n \to \lambda$, so that a fixed number  $\lambda $ serves as the limit average number of balls per bin, $e^{-\lambda}$ is the limit probability that a  given bin is empty, and the number of empty bins is asymptotic to $n e^{-\lambda}$.  
With 
\begin{equation}\label{def d}
  d(x) :=  e^{-x} \left( 1- (1+ \ x ) e^{-x} \right) \eand
\sigma^2(b,n) :=  d(b/n),
\end{equation}
Renyi  observes that $\var N_0(b,n) =n \,  \sigma^2(b,n) (1+O(b/n^2))$ --- so $ n \, \sigma^2$ is not the variance,
but rather, a nice approximation of the variance.  Likewise, $n e^{-b/n}$ 
is not $\e N_0$, but rather, a nice approximation.  We remark that for the case $x \to 0$,
\begin{align} \label{d.mac}
d(x) &= e^{-x}(1-(1+x)e^{-x}) \sim (1-(1+x)e^{-x}) \\ 
& = 1-(1+x)(1-x+x^2/2-O(x^3)) = x^2/2 + O(x^3) \sim x^2/2. \notag
\end{align}

A restatement of R\'enyi's Theorem 2, using our notation, is: \emph{If $b,n \to \infty$  with  $b=O(n)$ and  $b^2/n \to \infty$, then}
\begin{equation}\label{renyi}
    \frac{N_0(b,n) - n e^{-b/n}}{\sqrt{n} \, \sigma(b,n)} \Rightarrow Z.
\end{equation}
More advanced versions of Renyi's theorem, with concrete error bounds, are given by \cite{Englund,Mikhailov,BG}, 
but for our purpose, to get a central limit for the number of collisions, using duality as in the proof of Theorem \ref{MT}, R\'enyi's \eqref{renyi} is ideal.

\subsection{Unified normal limit, using duality}

The treatment in this section
is \emph{unified} in the sense that 
it handles both the regime $c_n \to \infty$ with $c_n=o(n)$, where the number of balls per bin approaches zero, and the regime
$c_n \sim \alpha_0 \, n$ with $\alpha_0 \in (0,\infty)$, where the number of balls per bin approaches a limit $\lambda_0 \in (0,\infty)$.

We define the  function $w \!: [0, \infty) \to [0, \infty)$ via 
\begin{equation}\label{def w}
w(x) := e^{-x} + x -1.
\end{equation}
  It is easily checked that $w$ is strictly increasing and onto, with $w(0) = 0$.  Given $c, n > 0$, we define
\begin{equation}\label{def beta}
\beta(c, n) := nw^{-1}(c/n).
\end{equation}
The function $w$ has the Maclaurin series:
\[
w(x) =  \frac{x^2}2 - \frac{x^3}6 + \frac{x^4}{24} - \cdots \quad \text{for small $x \ge 0$.}
\]
Therefore the inverse function has the expansion
\begin{equation} \label{winv}
w^{-1}(\delta) = \sqrt{2\delta} + \frac{\delta}3 + \frac{\delta^{3/2}}{9\sqrt{2}} + \frac{2\delta^2}{135} - \cdots \quad \text{for small $\delta\ge 0$.}
\end{equation}
In particular, when $c_n$ is a function of $n$ so that $c_n/n \to 0$, 
we have
\[
\beta(c_n,n) = \sqrt{2c_nn} + \frac{c_n}3 + \frac{1}{9\sqrt{2}} \frac{c_n^{3/2}}{\sqrt{n}} + \frac2{135} \frac{c_n^2}{n} - \cdots
\]
and  $\beta(c_n,n) \sim \sqrt{2c_nn}$.

We  define a continuous function $g(x)$ on $[0, \infty)$ via 
\begin{equation}\label{def g}
g(0) := \frac1{\sqrt{2}} \eand g(x) := \frac{\sqrt{d(x)}}{1 - e^{-x}}\ \text{for $x > 0$,}
\end{equation}
where $d(x)$ is as in \eqref{def d}.

\begin{theorem} \label{MT}
Suppose $\lim_{n \to \infty} c_n = \infty$ and $\lim_{n \to \infty} c_n/n = \alpha_0 \in [ 0, \infty)$.  Then, 
with $\beta(c,n)$ defined by \eqref{def beta} to give the centering,
with $w$  as defined by \eqref{def w},    $\la_0 := w^{-1}(\alpha_0)$,  and $g$ as defined by  \eqref{def g} to give the scaling,
we have the following convergence in distribution to the standard normal random variable $Z$:
\begin{equation}\label{our normal limit}
\frac{B(c_n, n) - \beta(c_n, n)}{g(\la_0) \sqrt{n}} \Rightarrow Z.
\end{equation}
\end{theorem}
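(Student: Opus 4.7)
The plan is to combine the duality from section \ref{occ.def} with R\'enyi's central limit theorem \eqref{renyi}. The identity $C(b,n) = b - n + N_0(b,n)$ together with $\{B(c_n,n) \le b\} = \{C(b,n) \ge c_n\}$ yields
\[
\P(B(c_n,n) \le b) = \P\bigl(N_0(b,n) \ge n - b + c_n\bigr).
\]
So for fixed $z \in \BR$, setting $b = b_n := \lceil \beta(c_n,n) + z\,g(\la_0)\sqrt{n}\rceil$, proving \eqref{our normal limit} reduces, via R\'enyi's theorem, to showing that the standardized threshold
\[
\frac{(n - b_n + c_n) - n e^{-b_n/n}}{\sqrt{n}\,\sigma(b_n,n)}
\]
converges to $-z$. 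Since $w(x) = e^{-x} + x - 1$, this numerator equals $c_n - n\,w(b_n/n)$.

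Next, I would exploit that the centering $\beta = \beta(c_n,n)$ was designed so that $n\,w(\beta/n) = c_n$ exactly. Taylor expanding $w$ around $\beta/n$ gives
\[
n\,w(b_n/n) = c_n + w'(\beta/n)\,(b_n - \beta) + \frac{w''(\xi/n)}{2n}\,(b_n - \beta)^2
\]
for some $\xi$ between $\beta$ and $b_n$. Since $w''$ is bounded on $[0,\infty)$ and $(b_n - \beta)^2 = O(n)$, the remainder is $O(1)$; hence
\[
\frac{c_n - n\,w(b_n/n)}{\sqrt{n}\,\sigma(b_n,n)} = -z\,g(\la_0)\,\frac{w'(\beta/n)}{\sigma(b_n,n)} + o(1),
\]
provided $\sqrt{n}\,\sigma(b_n,n) \to \infty$. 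Since $w'(x) = 1 - e^{-x}$, the identity $g(\la_0)\,w'(\la_0)/\sqrt{d(\la_0)} = 1$ is precisely the definition \eqref{def g} of $g$ when $\la_0 > 0$, and this is what forces the particular scaling in the theorem.

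It remains to verify the hypotheses of \eqref{renyi} for the sequence $b_n$ and to justify replacing $w'(\beta/n)$ and $\sigma(b_n,n)$ by their limits. In the regime $\la_0 > 0$ this is routine: $b_n/n \to \la_0$, so $\sigma(b_n,n) \to \sqrt{d(\la_0)} > 0$ and $w'(\beta/n) \to 1 - e^{-\la_0}$. The main obstacle is the degenerate regime $\la_0 = 0$, where both $w'(\beta/n)$ and $\sigma(b_n,n)$ vanish. Here $\beta \sim \sqrt{2 c_n n}$, so one needs R\'enyi's extension to $b = o(n)$ with $b^2/n \to \infty$; the bound $b_n^2/n \asymp c_n \to \infty$ follows from the assumption $c_n \to \infty$. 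The delicate small-$x$ asymptotics $w'(x) \sim x$ and $\sqrt{d(x)} \sim x/\sqrt{2}$ (compare \eqref{d.mac}) give $w'(\beta/n)/\sigma(b_n,n) \to \sqrt{2}$, matching $g(0) = 1/\sqrt{2}$. One last check is that $b_n - \beta = O(\sqrt{n})$ is negligible compared to $\beta$, so that $w'(b_n/n)/w'(\beta/n) \to 1$; this amounts to $\sqrt{n} = o(\beta(c_n,n))$, which again reduces to $c_n \to \infty$.
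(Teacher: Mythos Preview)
Your proof is correct and follows essentially the same route as the paper: duality to convert $\P(B(c_n,n)\le b)$ into a statement about $N_0(b,n)$, then R\'enyi's theorem \eqref{renyi}, with separate verification of the scaling in the cases $\la_0>0$ and $\la_0=0$. The only cosmetic difference is that you package the key estimate as a second-order Taylor expansion of $w$ (using $w''(x)=e^{-x}\le 1$), whereas the paper manipulates $ne^{-\lambda}-ne^{-b/n}$ directly; since $w'(x)=1-e^{-x}$, these are the same computation.
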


\begin{proof}

Let $c=c_n$ be given, with $c \to \infty$ and $c/n \to \alpha_0 \in [0,\infty)$.  Write $\beta=\beta(c,n), \lambda=\beta/n$ so that  \eqref{def beta} says
that $c/n=w(\beta/n) $, i.e., $c=n w(\lambda)$, i.e.,
\begin{equation}\label{c relation}
c = n \, e^{-\lambda} + n \lambda -n.
\end{equation}

For fixed real $y$, let
\begin{equation}\label{beta b}
    b = \beta + y \sqrt{n}, \text{ so } b/n  = \lambda + y/\sqrt{n}.
\end{equation}
In terms of the cumulative distribution function $\Phi$ for the standard normal, so that $\p(Z>y)  = \p(Z< -y) =: \Phi(-y)$, \eqref{our normal limit} is equivalent to showing that
for each fixed $y$, $\p(B(c,n)>b) \rightarrow  \p(g(\lambda_0) \, Z>y) = \Phi(-y/g(\lambda_0))$.  

To enable R\'enyi's result \eqref{renyi}, we need to check that $b > 0$ for sufficiently large $n$, that $b/n$ is bounded,  and that $b^2/n \to \infty$.
As $b/n = \beta/n + y/\sqrt{n} = w^{-1}(c_n/n) + y/\sqrt{n}$ and $c_n$ is $O(n)$, $b/n$ is bounded.  Further, $\lim_{n \to \infty} \sqrt{n} w^{-1}(c_n/n) = \infty$ --- this is obvious if $\alpha_0 \ne 0$ and follows from $c_n \to \infty$ and \eqref{winv} if $\alpha_0 = 0$ --- hence $\lim_{n \to \infty} b/\sqrt{n} = \infty$. 

Recall, from \eqref{notation I} and \eqref{notation C}, that  $C(b,n)$,  the number of collisions obtained after throwing $b$ balls, and $N_0(b,n)$ for the number of empty bins remaining after throwing $b$ balls, are related by
\[
C(b,n) = b - (n - N_0(b,n)).
\]
So by duality we have:
\[
  \p(B(c,n)> b)    = \p( C(b,n) < c)  = \p( N_0(b,n)  < n -( n \lambda + y \sqrt{n})  +c).
\]
Applying \eqref{c relation}, we find:
\begin{align}
\p(B(c,n)>b) &= \p( N_0(b,n)  <  n e^{-\lambda} - y \sqrt{n} ) \label{middle big display} \\
&   = \p\left( N_0(b,n) - n e^{-b/n} <  y \sqrt{n}(e^{-\lambda} -1) + O(1)\right). \label{middle.2}
\end{align}

We remark that $\sqrt{n} \, \s(b, n) = \sqrt{n\,d(b/n)} \to \infty$.  Indeed, as $b/n$ is bounded and $d(x)$ is nonzero for positive $x$, it suffices to check this in the case where $b/n \to 0$, where it follows from \eqref{d.mac}.    Therefore, dividing both sides of the inequality in \eqref{middle.2} gives
\[
\p(B(c,n) > b) = \p\left( \frac{N_0(b,n) - n e^{-b/n}}{\sqrt{n}\,\s(b,n)} <  \frac{y(e^{-\lambda} -1)}{\s(b,n)}\right).
\]
By \eqref{renyi}, to complete the proof of the theorem it remains only to verify that
\begin{equation} \label{renyi.verify}
\s(b,n)/(e^{-\la}-1) \to -g(\la_0).
\end{equation}

If $\alpha_0=0$ then  $\lambda_0  = w^{-1}(0) = 0$.  We have $  \beta(c,n) \sim \sqrt{2cn}$ hence $c \to \infty$ implies $b \sim \beta$.
Using $\lambda=\beta/n \to 0$ we get $\s^2(\beta,n) \sim \la^2/2$ by \eqref{d.mac},
so $\sigma(\beta,n) \sim \lambda/\sqrt{2}= \lambda g(\lambda_0)$.  It follows similarly, since $b \sim \beta$, that
$\sigma(b,n) \sim \sigma(\beta,n) \sim \lambda g(\lambda_0)$.  And of course $e^{-\lambda}-1 \sim -\lambda$; \eqref{renyi.verify} follows.

If $\alpha_0 > 0$  then $\lambda \to \lambda_0 = w^{-1}(\alpha_0) > 0$, and $\sigma^2(b,n) \to d(\lambda_0)$, a number, verifying \eqref{renyi.verify}.
This concludes the proof of the theorem.
\end{proof}

\section{Uniform integrability and concentration in the centered case
} \label{UI.duality}

\subsection{Overview:  background on concentration inequalities}

In order to conclude, from Theorem \ref{MT}, that the variance of $B(c,n)$ is asymptotic to $n \, g(\lambda_0)^2 $, we need uniform integrability.  
As in the proof of Theorem \ref{MT}, fluctuations for $B(c,n)$, the number of balls that must be tossed to get $c$ collisions,  are related via duality to fluctuations of 
\begin{equation}\label{C N_0 redundant}
   C(b,n) = b - (n - N_0(b,n)),
\end{equation}
the number  of collisions resulting from tossing $b$ balls.
Hence we investigate concentration bounds for $N_0(b,n)$, or directly equivalently, concentration bounds for $C(b,n)$.  

The central region, with $c$ and $b$ both of order  $n$, is relatively easy to handle.  In contrast, it took much effort to
understand the region of main interest here:  $c \to \infty$ with $c=o(n)$, so that $c=o(b)$ and $b=o(n)$.
The following three random variables with exactly the same variance, but in the region of main interest, their expectations have different order of growth.  From large to small, they are:
\begin{eqnarray*}
     N_0(b,n) & \text{ with } & \e N_0(b,n) \sim n,  \\
     n-N_0(b,n) & \text{ with } & \e (n - N_0(b,n)) \sim b, \text{ and }  \\  
       C(b,n) = b - (n - N_0(b,n))   & \text{ with } & \e C(b,n) \sim c .
\end{eqnarray*}

Consider applying Azuma's  inequality for martingales with bounded differences,  with random variables $X_1,\ldots,X_b$
$ \in \{1,2,\ldots,n \}$  
to say the destination bin for each ball, and sigma-algebras $\cF_i := \sigma(X_1,\ldots,X_i)$ to carry the information known when the first $i$ balls have been tossed, and $M_i := \e(N_0(b,n)|\cF_i)$ for the martingale.  It is obvious that $|M_i-M_{i-1}| \le 1$ for $i=1$ to $b$, so Azuma gives the bounds  
\begin{eqnarray}\label{azuma}
\p(N_0(b,n) - \e N_0(b,n) \ge t) &\le& \exp(-t^2/(2b)),  \\
\ \ \p(N_0(b,n) - \e N_0(b,n) \le -t) &\le& \exp(-t^2/(2b))  \nonumber.
\end{eqnarray} 
For the central region, where $c$ and $b$ are both of order $n$, these bounds give us enough concentration to prove the desired UI result.  In contrast, for the region of main interest to us, with $c=o(n)$ and $b \sim \sqrt{2cn}$, the bounds \eqref{azuma} are inadequate.
For the region of main interest, 
we use a bounded size bias coupling whose existence is provided in the next subsection.

\subsection{Bounded size-biased couplings for the number of collisions} \label{sizebias}
\begin{prop}
There is a coupling of $C(b,n)$ with its size-biased version $C'(b,n)$ such that $C'(b,n) - C(b,n) \in \{-1, 0, 1, 2\}$ for all outcomes.  Furthermore, if each $X_i$ as defined in the previous paragraph is uniformly distributed on the boxes $1, \ldots, n$ (as assumed in the rest of this paper), then there is a coupling such that $C'(b,n) - C(b,n) \in \{ 0, 1 \}$ for all outcomes.
\end{prop}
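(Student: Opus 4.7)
My plan is to apply the standard construction of bounded size-bias couplings for sums of Bernoulli indicators, using the decomposition
\[
 C(b,n) = \sum_{t=1}^{b} Y_t, \qquad Y_t = \mathbf{1}(X_t \in \{X_1,\ldots,X_{t-1}\}).
\]
Write $\mu_t := \P(Y_t = 1)$ and $\mu := \E C(b,n) = \sum_t \mu_t$. I would first sample $T \in \{1,\ldots,b\}$ with $\P(T = t) = \mu_t/\mu$ and then draw $X^{(T)}$ from the conditional law of $X$ given $Y_T = 1$, so that $C' := C(X^{(T)})$ has the size-biased law of $C(b,n)$ by the standard identity $\E[Cf(C)] = \sum_t \mu_t \E[f(C) \mid Y_t = 1]$. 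The real work is to couple $X$ and $X^{(T)}$ so that few coordinates differ; the basic input is that modifying one coordinate of $X$ changes $I(b,n) = b - C(b,n)$ by at most one, since the old bin of the moved ball can lose only its last occupant and the new bin can gain only its first.

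For the general case I would leave $X_{T+1},\ldots,X_b$ fixed (they are independent of $Y_T$) and realize the change in $(X_1,\ldots,X_T)$ by modifying at most two coordinates---position $T$ and possibly one position $s<T$---following the Radon--Nikodym derivative between the conditional and unconditional laws on $[n]^T$. Two coordinate changes give $|C'-C| \le 2$, and since conditioning on $Y_T = 1$ installs a new collision at position $T$ while destroying at most one existing collision elsewhere, $C' - C \ge -1$, hence $C' - C \in \{-1,0,1,2\}$.

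For the uniform case the conditional distribution of $(X_1,\ldots,X_{T-1})$ given $Y_T = 1$ is the uniform law size-biased by $I(T-1,n)$, and $X_T$ is uniform on the distinct values in $\{X_1,\ldots,X_{T-1}\}$. I plan to produce a single-coordinate modification: if $Y_T(X) = 1$ set $X' = X$; if $Y_T(X) = 0$ replace $X_T$ by a value drawn from $\{X_1,\ldots,X_{T-1}\}$ using a joint rule on the pair $(T,\text{replacement})$ that absorbs the $I(T-1,n)$-bias on the history. In the nontrivial case the new value lies in an already occupied bin (from the perspective of the first $T-1$ balls) while the old value did not, so no bin becomes newly occupied while the old bin either stays occupied (if some later ball shares it) or empties; hence $I' - I \in \{-1,0\}$ and $C' - C = I - I' \in \{0,1\}$.

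The delicate step will be calibrating the joint law of $(T,\text{replacement})$ so that the coupling reproduces the target density $n^{-b}\mathbf{1}(Y_t(x)=1)/\mu$ on $(x,t)$; a naive choice---sampling $T$ independently with weights $\mu_t/\mu$ and then picking the replacement uniformly from the distinct values in the history---yields the wrong marginal, as can be checked directly for $(b,n)=(3,2)$. I expect to handle this either by letting $\P(T=t \mid X)$ depend on $I(t-1,n)$ or by introducing an auxiliary uniform bin $V \in \{1,\ldots,n\}$ whose membership in the history supplies the required size-biasing automatically, and then to verify the target density by a direct Bayes computation.
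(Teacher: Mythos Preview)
Your overall strategy---represent $C(b,n)$ as a sum of Bernoulli indicators and build the size-biased version by conditioning on one summand---is the paper's strategy too. But both halves of your proposal stop short of an actual construction, and in the uniform case the route you chose runs into a difficulty that the paper's construction sidesteps.

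For the general (non-uniform) claim, you assert that the change in $(X_1,\ldots,X_T)$ can be realized by modifying at most two coordinates ``following the Radon--Nikodym derivative'', without saying how. The paper makes this concrete by passing to the finer decomposition $C(b,n)=\sum_{j}\sum_{k} Y_{jk}$ with $Y_{jk}=\mathbf 1(X_j=k)\,\mathbf 1(\exists\, i<j:\ X_i=k)$. Conditioning on $Y_{jk}=1$ factors into two independent pieces: force $X_j=k$ (one coordinate change, contributing $-1,0$ or $1$ to $C$), and force $\sum_{i<j}\mathbf 1(X_i=k)>0$; the latter is a sum of independent Bernoullis conditioned to be positive, so the sandwich principle gives a coupling changing at most one earlier coordinate (contributing $0$ or $1$). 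Working with $\sum_j W_j$ directly, as you do, hides this factorisation; the required two-move coupling is recoverable, but only after you unwind to the $(j,k)$ level anyway.

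For the uniform claim, your plan is to move $X_T$ into the history and absorb the $I(T-1,n)$-bias on $(X_1,\ldots,X_{T-1})$ into the rule for choosing $(T,\text{replacement})$. You correctly diagnose that the naive rule fails, but you do not supply a rule that works; the proposal ends with ``I expect to handle this\ldots''. The paper avoids the whole issue by moving an \emph{earlier} ball rather than ball $T$. The point is that, by symmetry over bins, the conditional law of $X_T$ given $W_T=1$ is still uniform on $\{1,\ldots,n\}$; so $X_T$ need not move at all. Given $X_T$, the event $W_T=1$ is exactly $S:=\sum_{i<T}\mathbf 1(X_i=X_T)>0$, with $S$ Binomial$(T-1,1/n)$, and coupling $S$ to $(S\mid S>0)$ with $S'-S\in\{0,1\}$ via the sandwich principle translates into moving at most one ball $X_I$ (with $I<T$) into box $X_T$. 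That move creates at least one new collision and destroys none, so $C'-C\in\{0,1\}$ immediately, with no calibration step needed.
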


\begin{proof}
We assume that the $X_1,\ldots,X_b$
are mutually independent,  but not necessarily uniformly distributed, nor even identically distributed.
We will consistently use the following notation:  $1 \le i < j \le b, 1 \le k \le n$, so that $i$ and $j$ refer to balls,
and $k$ to bins,  and $i$ is tossed before $j$.  Note, this entails $j \ge 2$.

Write $Z_{ik} \equiv Z_{i,k} = 1(X_i=k)$ for the indicator that ball $i$ lands in box $k$.  The indicator that ball $j$ lands in box $k$ \emph{and} accounts for a new collision --- because at least one earlier ball had already landed in box $k$ --- is
$$
  Y_{jk} = Z_{jk} \, 1( Z_{1k}+Z_{2k}+\cdots +Z_{j-1,k} > 0)
$$
and the indicator that ball $j$, when it lands, accounts for a new collision, is
$$
   W_j = \sum_{k=1}^n Y_{jk}.
$$
Hence the total number of collisions, when $b$ balls are tossed into $n$ boxes, can be expressed as
\begin{equation} \label{C.as.sum}
   C(b,n) = \sum_{j=2}^b  W_j  \end{equation}
or
\begin{equation} \label{C.as.sum2}
   C(b,n)  = \sum_{j=2}^b \sum_{k=1}^n Y_{jk} 
\end{equation}

We recall some basics about size bias, as presented by \cite[equations (15) and (12)]{AGK}.  First, for sums as 
\eqref{C.as.sum} or \eqref{C.as.sum2}, the size bias distribution is naturally expressed as a mixture, with weights proportional to the contribution that a single term makes to the expected sum, of the sum for the process where the joint distribution of summands is biased in the direction of the chosen summand.  Second, when the summands are indicators,  biasing in the direction of the chosen summand is the same as conditioning on the event indicated by the chosen summand.  Finally,  when the summands, such as those in
\eqref{C.as.sum} or  \eqref{C.as.sum2}, are derived from an underlying process describing where every ball lands, such as $\BX=(X_1,X_2,\ldots,X_b)$,  then biasing the process of summands can be done by conditioning the entire underlying process on the event indicated by the chosen summand.  We will find, for each summand, a coupling of $\BX$  with $\BX'=(\BX$, conditional on the event indicated by that summand), which will give a coupling of the original $C(b,n)$ with the conditioned version $C'(b,n)$ such that $|C'(b,n) - C(b,n)|$ is bounded.

Consider the  sum in \eqref{C.as.sum2}.  Assume $\e Y_{jk}>0$.  The event indicated by $Y_{jk}$ is an intersection of two independent events, so conditioning on this is the same as conditioning on ball $j$ landing in box $k$, and  at least one of balls $1,2,\ldots,j-1$ landing in box $k$.  The sum $S=Z_{1k}+Z_{2k}+\cdots +Z_{j-1,k}$  is a sum of independent Bernoulli random variables, so by the sandwich principle \cite[Cor.~7.4]{AB} there is a coupling of $S$ with $S'$ in which $S'-S \in \{0,1\}$ for all outcomes $\omega$, where $S'$ is distributed as $S$ conditioned to be nonzero.  

This coupling of $S$ with $S'$ lifts to a coupling of $\BX$ with $\BX'$, in which $X_i=X_i'$ for all $i>j$, $X_j'=k$, and, either  $S'-S=0$ and $X_i=X_i'$ for $i=1$ to $j-1$, or else $S'-S=1$  and  $X_i=X_i'$ for $i=1$ to $j-1$ with a single exception $I$, with $X_I\ne k$,
and $X_I'=k$.  (To see this, begin with the observation that we have given values $p_1,\ldots,p_{j-1}$  with $p_i = \e Z_{ik} = \p(X_i =k)$,  and there is a unique distribution for
a permutation $(I_1,\ldots,I_{j-1})$ of $\{1,2,\ldots,j-1\}$,   such that starting from the all zero vector in $\{0,1\}^{j-1}$, and changing coordinates one at a time to one, according to the indices $I_1,\ldots, I_{j-1}$,  yields the process
$(Z_{ik},Z_{2k},\ldots,Z_{j-1,k})$ conditional on successively $S=0,S=1,\ldots,S=j-1$.  Indeed this is explicitly the distribution of the size biased permutation of $(p_1,\ldots,p_{j-1})$.)
For a summary of the changes in going from $\BX$ and $C(b,n)$ to $\BX'$ and $C'(b,n)$, ball $j$ might move to box $k$, causing $C$ to change by $-1$, $0$, or 1 (i.e., maybe lose a collision in the box $X_j$ where ball $j$ used to land, maybe gain a collision in box $k$) and there might also be one ball, with random label $I$ in the range $1$ to $j-1$, which moves to box $k$, causing an additional change to $C$ by 0 or 1. (Minus 1 is not a possibility, since ball $I$, upon moving to box $k$, causes at least one additional collision.)  The net result is
that our coupling has $C'(b,n)-C(b,n) \in \{-1,0,1,2\}$;  we have a $2$-bounded size bias coupling.  A general principle relating bounded couplings, monotone couplings, and bounded monotone couplings, \cite[Prop.~7.1]{AB}, now implies that there exists a coupling of $C(b,n)$ with its size biased version $C'(b,n)$, for which $0 \le C' - C \le 2$.

Now consider the sum in  \eqref{C.as.sum}, \emph{and} assume that we are in the classical occupancy problem, i.e., that each $X_i$ is \emph{uniformly} distributed on the boxes $1,\ldots,n$.  The event indicated by the summand $W_j$ may be expressed as
\[  
W_j = 1(S>0) \text{ where } S := \sum_{i=1}^{j-1} 1(X_i = X_j).
\]
Thanks to the uniform distributions of the $X_1,\ldots,X_j$, 
the distribution of
$S$ is Binomial$(j-1,\frac{1}{n})$.   As in the previous paragraph, we couple $S$ to $S'$, distributed as $S$ conditional on being strictly positive,  by adding either 0 or 1,  and this lifts to a coupling of $\BX$ with $\BX'$ in which either no ball moves, or else exactly one ball, with random index $I$, moves from a box other than $X_j$, to box $X_j$, where it causes one additional collision.  We have $C'-C \in \{0,1\}$ for all outcomes, i.e., we have a 1-bounded monotone coupling.
\end{proof}

In the setting considered in this paper, the $X_i$'s are uniformly distributed on $1, \ldots, n$, so the proposition provides a 1-bounded monotone coupling of $C'$ with $C$.  Combining this with the main result of \cite{GhoshGold} immediately gives, with 
 $\mu := \e C(b,n)$,
\begin{eqnarray}\label{ghosh}
 \p ( C(b,n) - \mu \le -t  )  &\le&  \exp( -t^2 / ( 2 \mu ) ),\\ 
  \p ( C(b,n) - \mu \ge t  )  &\le&   \exp( -t^2 / ( 2 \mu +  t) ). \nonumber
\end{eqnarray}
for all $t > 0$ and all $b, n$.  This strengthens the Azuma bounds from \eqref{azuma}.

\subsection{Uniform integrability  for $(B(c,n)-\beta(c,n))/\sqrt{n}$.}

\begin{lem}\label{ui2a}
Assume, as in Theorem \ref{MT}, that we are given positive integers $c_1,c_2,\ldots$ with  $\lim_{n \to \infty} c_n = \infty$ and $\lim_{n \to \infty} c_n/n = \alpha_0 \in [ 0, \infty)$.  With  $\beta(c_n,n)$ given by \eqref{def beta}, there exists $n_0 < \infty$ and $\epsilon > 0$ such that for all $n \ge n_0$ and for all $y$,
$$
    \p( | B(c,n) - \beta(c,n) | \ge y\sqrt{n} ) \le \exp( - \min(y, y^2)/104 \, ) .
$$
\end{lem}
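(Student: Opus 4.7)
The plan is to use duality to convert the statement into a concentration inequality for $C(b,n)$, and then apply the concentration inequality \eqref{ghosh}, whose existence relies on the $1$-bounded size-bias coupling constructed in Section \ref{sizebias}. By \eqref{define B}, for any integer $b$ we have $\{B(c,n) > b\} = \{C(b,n) < c\}$ and $\{B(c,n) \le b\} = \{C(b,n) \ge c\}$. Taking $b_+$ and $b_-$ to be the integers nearest $\beta(c,n) + y\sqrt{n}$ and $\beta(c,n) - y\sqrt{n}$ respectively, a union bound gives
\[
\P\bigl(|B(c,n) - \beta(c,n)| \ge y\sqrt{n}\bigr) \le \P\bigl(C(b_+, n) < c\bigr) + \P\bigl(C(b_-, n) \ge c\bigr),
\]
with the convention that the lower tail vanishes when $\beta(c,n) - y\sqrt{n} < 0$.

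Let $\mu_\pm := \E\, C(b_\pm, n) = b_\pm - n + n(1-1/n)^{b_\pm}$. Using the elementary bound $(1-1/n)^b = e^{-b/n}(1 + O(b/n^2))$, uniformly for $b \le n$, yields $\mu_\pm = n\, w(b_\pm/n) + O(1)$, with $w$ as in \eqref{def w}. Since $c = n\, w(\beta/n)$ by \eqref{def beta}, and $w$ is convex with $w'(x) = 1 - e^{-x}$ increasing, the mean value theorem gives, for the upper tail,
\[
\mu_+ - c \ge n\bigl[w(b_+/n) - w(\beta/n)\bigr] + O(1) \ge (1 - e^{-\beta/n})\, y\sqrt{n} - O(1),
\]
and, analogously, $c - \mu_- \ge (1 - e^{-b_-/n})\, y\sqrt{n} - O(1)$ for the lower tail. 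Applying \eqref{ghosh} with $\mu_\pm$ of order $c$, the exponent in the Gaussian regime satisfies, up to lower-order terms,
\[
\frac{(\mu_\pm - c)^2}{2\mu_\pm}\ \gtrsim\ y^2 \cdot \frac{(1-e^{-\lambda})^2}{2 w(\lambda)}, \qquad \lambda := \beta(c,n)/n.
\]
The function $\lambda \mapsto (1-e^{-\lambda})^2/(2w(\lambda))$ tends to $1$ as $\lambda \to 0$ (from the Taylor expansion $w(\lambda) = \lambda^2/2 + O(\lambda^3)$) and is positive and continuous on $[0,\infty)$; since $\lambda \to \lambda_0 \in [0,\infty)$, for $n$ large enough this exponent is $\ge y^2/104$ as long as $y$ lies in the ``small $y$'' range where the linearization is valid.

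For the remaining ``large $y$'' regime, $|\mu_\pm - c|$ becomes comparable to or larger than $\mu_\pm$, and the denominator $(2\mu + t)$ in the second bound of \eqref{ghosh} causes a transition to exponential decay, producing an exponent linear in $y$ with coefficient of order $\sqrt{n}$, which dominates $y/104$ once $n \ge n_0$. The main obstacle is the bookkeeping needed to obtain a single constant that works uniformly across both the $\alpha_0 = 0$ and $\alpha_0 > 0$ cases and through the Gaussian-to-exponential transition. The $\alpha_0 = 0$ case is the most delicate, because $w'(\lambda) \to 0$; the argument survives because the vanishing factor $(1-e^{-\lambda})^2 \sim \lambda^2$ in the numerator cancels against $2 w(\lambda) \sim \lambda^2$ in the denominator, leaving a clean lower bound depending on $y$ alone; this is also the reason the scaling $\beta(c,n) \sim \sqrt{2 c n}$ is exactly the right centering. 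All integer-rounding and approximation errors contribute $O(1)$ and are absorbed into the choice of $n_0$.
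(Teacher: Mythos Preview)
Your approach is the paper's: duality plus the size-bias concentration \eqref{ghosh}, and the key cancellation you identify---that $(1-e^{-\lambda})^2/(2w(\lambda))\to 1$ as $\lambda\to 0$, so the $\alpha_0=0$ case survives---is exactly what the paper exploits. The paper simply makes this explicit: it bounds $t\ge\frac{1}{5}\lambda y\sqrt{n}$, hence $t^2\ge cy^2/13$, and combines this with $\mu\le 4c$ to reach exponent $y^2/104$ in the main range.

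Where your sketch is too thin is the large-$y$ regime. Your description ``exponent linear in $y$ with coefficient of order $\sqrt{n}$'' does not match what actually occurs: for the upper tail the exponent from \eqref{ghosh} is $(\mu_+-c)^2/(2\mu_+)\approx\mu_+/2$, and while $b_+=o(n)$ one has $\mu_+\approx b_+^2/(2n)$, which is \emph{quadratic} in $y$ once $y\sqrt{n}\gg\beta$, not linear with a $\sqrt{n}$ coefficient. The paper handles this range by an explicit three-way split: for $y\le n^{2/5}$ the quadratic bound still gives exponent $\ge y^2/17$; for $y\ge n^{3/5}$ the probability is zero since then $b_+>c+n\ge B(c,n)$; and the gap $n^{2/5}\le y\le n^{3/5}$ is bridged by the monotonicity trick $\P(B>\beta+y\sqrt{n})\le\P(B>\beta+\sqrt{y}\,\sqrt{n})$, applying the first case to $y'=\sqrt{y}$. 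This $\sqrt{y}$ trick is the actual source of the $\min(y,y^2)$ in the statement and is the piece of bookkeeping you gestured at but did not supply. Separately, for the lower tail your MVT bound $c-\mu_-\ge(1-e^{-b_-/n})y\sqrt{n}$ degrades as $b_-/n\to 0$ and cannot by itself deliver a uniform constant; the remedy is that in that range $t=c-\mu_-\approx c$ and $2\mu_-+t\approx c$, so the exponent from \eqref{ghosh} is $\approx c\ge y^2/2$, since $b_->0$ forces $y\lesssim\beta/\sqrt{n}\sim\sqrt{2c}$.
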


\begin{proof}
We check the  bound for $\p(B(c,n) - \beta(c,n) \ge y\sqrt{n})$ with $y \ge 0$; the case of the other sign is comparatively easy and we omit the details. Start from 
\eqref{beta b} and \eqref{middle big display}, and write out explicitly $\e N_0 \equiv \e N_0(b,n) = n \,(1-\frac{1}n)^b$.  This yields
$$ \p(B(c,n)> b)  = \p( N_0(b,n) - \e N_0 < -t)   = \p( C(b,n) - \e C(b,n) < - t)
$$ 
where 
\begin{equation}\label{def t}
  t= n (1-\frac{1}n)^b    -    n e^{-\lambda} + y \sqrt{n}.
\end{equation}
The precise goal 
is to show that there exist $n_0,y_0$ such that for all $n>n_0,y>y_0$,  $t^2/\e C(b,n) \ge \ell(y)$ with $\ell(y)/\log y \to \infty$ as $y \to \infty$.  That is, we want a lower bound on $t^2/\e C(b,n)$ that grows with $y$, and is uniform in $c,n$. 
 The analysis is similar to
that in the proof of Theorem \ref{MT}, 
but we want an inequality, carefully processed to show uniformity.

Using $-\log(1-\frac{1}n) = \frac{1}n + \frac{1}{2n^2} + \frac{1}{3n^3} + \cdots \le \frac{1}n + \frac{1}{n^2}$ for $n \ge 2$, and $e^{z}-1\ge z$ for all real $z$, we have, for $n \ge 2$,
\begin{eqnarray*}
n(1-\frac{1}n)^{b} - ne^{-b/n}  & \ge & n \left( \exp\left(-b \left(1/n + 1/n^2\right)\right) -e^{-b/n}\right)  \\
 & = &n \ e^{-b/n} (\exp(-b/n^2) -1 )  \\
 & \ge & -n \ e^{-b/n} \ \frac{b}{n^2} \ge - \frac{b}n 
\end{eqnarray*}
Using $e^{z}-1\ge z$ again, we have
\[ 
n(e^{-b/n} - e^{-\lambda} ) = n e^{-\lambda} \left( \exp(-y/\sqrt{n})-1 \right) \ge - e^{-\lambda} y \sqrt{n}.
\] 
Adding these two bounds, together with the final term of $t$ from \eqref{def t}, we have
$$
   t \ge - \frac{b}n  + (1 - e^{-\lambda}) y \sqrt{n} = -\lambda - y/\sqrt{n} + (1 - e^{-\lambda}) y \sqrt{n}.
$$
For the delicate case, which is $c\to \infty, c/n \to \alpha_0=0$, we have $\beta^2(c,n) \sim 2cn$ hence $\lambda \to 0$. (The case $\alpha_0>0$ so that $1-e^{-\lambda} \to 1-e^{-\lambda_0}>0$, hence $t \asymp y \sqrt{n}$ is \emph{very easy} in comparison, and can even be handled via Azuma-Hoeffding; we omit further details.) 
With some choice $n_0 \ge 16$, for all $n > n_0$, $1-e^{-\lambda} > \lambda/2$, hence $t \ge  -\lambda - y/\sqrt{n} + \frac{1}2 \lambda y \sqrt{n}$,
and hence for all $y \ge 1$ we have $t \ge   - y/\sqrt{n} + \frac{1}4 \lambda y \sqrt{n}$.  Finally, since $n \lambda = \beta(c,n) \to \infty$, increasing $n_0$ if needed, for all $y \ge 1, n \ge n_0$, we have $t \ge    \frac{1}5 \lambda y \sqrt{n}$.  Squared, $t^2 \ge    \frac{1}{25} \lambda^2 n y^2$, and since 
$\lambda^2 n=\beta^2(c,n)/n \sim 2cn/n = 2c$, increasing $n_0$ if needed, for all $y \ge 1, n \ge n_0$, we have
$t^2 \ge \frac{c}{13} y^2$.

The upper bound \eqref{ghosh}  
has the form:  for $t \ge 0$, $\p(C(b,n) - \e C(b,n) \le -t) \le \exp(-r)$
where
$
  r := t^2/(2 \e C(b,n))    $.  
We are in good shape when 
$\e C(b,n)  \le 4c$, which yields $r \ge y^2/104$.  
(Essentially, this is the main range, with $y ^2 = O(c)$.)

For the remaining cases, where
$y$ is so large that $\e C(b,n) > 4c$, we bypass \eqref{middle big display} and work directly with \eqref{beta b} and
the duality.  Write $\mu := \e C(b,n)$.  With $y \ge 0$ and $b=\beta(c,n)+y \sqrt{n}$ such that $\mu >4c$, hence $t := \mu -c >\frac{1}2 \mu$,
\begin{multline*}
\p(B(c,n) >b) = \p(C(b,n)<c) = \p(C(b,n)-\mu < c - \mu)  \\
\le  \p(C(b,n)-\mu < - \frac{1}2 \mu) \le
\exp(-r) \text{ where } r=\frac{(\mu/2)^2}{2 \mu} = \mu/8.
\end{multline*}
Now in case $y \le n^{2/5}$, using $\beta \sim \sqrt{2cn} = o(n)$, so uniformly in $y \le n^{2/5}$,  $b=o(n)$ and $4c < \mu \sim b^2/(2n) \sim (\sqrt{2c}+y)^2/2$, hence $\mu > 4c$ and
$c \to \infty$ implies $\inf_y y^2/\mu \ge 1/2$ so for sufficiently large $n, \p(B(c,n)>b) \le \exp(- y^2/17)$.  Also, in case $y \ge n^{3/5}$, we have $b > n^{6/5}$ and $B(c,n) \le c+n$, hence, for sufficiently large $n$,  $\p ( B(c,n) > b) = 0$.
 To cover the missing range, if $n^{2/5} \le y \le n^{3/5}$ we simply use $y' = \sqrt{y}$ and $\p(B(c,n) \ge \beta+y \sqrt{n}) \le \p(B(c,n) \ge \beta+y' \sqrt{n})$.
 \end{proof}

\begin{lem}\label{ui2}
Assume, as in Theorem \ref{MT}, that we are given positive integers $c_1,c_2,\ldots$ with  $\lim_{n \to \infty} c_n = \infty$ and $\lim_{n \to \infty} c_n/n = \alpha_0 \in [ 0, \infty)$.  With  $\beta(c_n,n)$ given by \eqref{def beta}, there exists $n_0 < \infty$, such that for every $k=1,2,\ldots$, the family
$$
 \left\{  \left( \frac{B(c_n, n) - \beta(c_n, n)}{\sqrt{n}} \right)^k:  n \ge n_0 \right\}
$$
is uniformly integrable.
\end{lem}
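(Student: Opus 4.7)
The plan is to reduce uniform integrability of the family $\{|Y_n|^k : n \ge n_0\}$, where $Y_n := (B(c_n, n) - \beta(c_n, n))/\sqrt{n}$, to a uniform bound on moments of order $k+1$. Once we establish $\sup_{n \ge n_0} \E |Y_n|^{k+1} < \infty$, uniform integrability of $\{|Y_n|^k\}$ follows from the standard Markov-type inequality
\[
\E \bigl[ |Y_n|^k \, \mathbf{1}_{\{|Y_n|^k \ge M\}} \bigr] \le M^{-1/k} \, \E |Y_n|^{k+1},
\]
whose right-hand side tends to $0$ as $M \to \infty$, uniformly in $n \ge n_0$.

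To establish the $L^{k+1}$ bound, I would use the tail-integral representation
\[
\E |Y_n|^{k+1} = (k+1) \int_0^\infty y^k \, \P(|Y_n| \ge y) \, dy
\]
and plug in Lemma \ref{ui2a}, which supplies, for $n \ge n_0$ and every $y \ge 0$, the sub-exponential tail bound $\P(|Y_n| \ge y) \le \exp(-\min(y, y^2)/104)$. Splitting the integral at $y=1$: on $[0,1]$ the tail factor is at most $1$, so the contribution is bounded by $\int_0^1 (k+1) y^k \, dy = 1$; on $[1, \infty)$ we have $\min(y,y^2) = y$, so the integrand is dominated by $(k+1) y^k e^{-y/104}$, whose integral is a finite $\Gamma$-type constant depending only on $k$. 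Summing the two pieces gives a bound on $\E|Y_n|^{k+1}$ that is independent of $n$.

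The genuinely substantive work has already been carried out in Lemma \ref{ui2a}; the remaining argument is routine probabilistic calculus. The one point requiring a glance is that the tail bound must apply for arbitrarily large $y$ --- in particular values so large that $\{B(c_n,n) - \beta(c_n,n) \ge y\sqrt{n}\}$ is empty owing to the deterministic bound $B(c,n) \le c + n$. Inspection of the proof of Lemma \ref{ui2a} shows that this regime is explicitly handled (the probability is zero outright for $y \ge n^{3/5}$), so the stated tail bound is valid uniformly in $y \ge 0$ and $n \ge n_0$, and the integral computation above goes through without modification. Thus I do not anticipate any serious obstacle; the structure is entirely driven by the sub-exponential tail already in hand.
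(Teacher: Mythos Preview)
Your proposal is correct and follows essentially the same approach as the paper: invoke the tail bound of Lemma~\ref{ui2a} to conclude uniform boundedness of the $(k+1)$-st moments of $Y_n$, which in turn yields uniform integrability of the $k$-th powers. The paper's proof is a one-sentence sketch of exactly this logic; your version simply fills in the tail-integral computation and the Markov-type inequality explicitly.
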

\begin{proof}
As in the proof of Lemma \ref{UI}, the uniform and super-polynomial decaying upper bound from Lemma \ref{ui2a} implies uniform boundedness of the $(k+1)$-st moments for  the $(B(c_n,n)-\beta(c_n,n))/\sqrt{n})$, $n \ge n_0$, which in turn implies uniform integrability of the family of $k$-th powers.
\end{proof}

\section{Moments and variance in the centered case
} \label{moments.duality}

\begin{cor}\label{cor var}
As in Theorem \ref{MT}, suppose $\lim_{n \to \infty} c_n = \infty$ and $\lim_{n \to \infty} c_n/n = \alpha_0 \in [ 0, \infty)$.  Then, with $w$ and $g$ as defined by \eqref{def w} and \eqref{def g}, and with  $\la_0 = w^{-1}(\alpha_0)$,
$$   
     \var  B(c_n,n) \sim n\, g(\lambda_0)^2.
$$
In particular, if $c_n \to \infty$ with $c_n=o(n)$, then $\var B(c_n,n) \sim n/2$.
Furthermore, 
\[
\begin{array}{rclr}
\e [(B(c_n, n) - \beta(c_n, n))^k] &=& o(n^{k/2})&\text{for $k = 1, 3, 5, \ldots$} \\
\e [(B(c_n, n) - \beta(c_n, n))^k] &\sim &(k-1)!! \,g(\lambda_0)^{k} \, n^{k/2}&\text{for $k=2,4,6,\ldots$}
\end{array}
\]
\end{cor}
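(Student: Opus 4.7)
The plan is to combine the distributional convergence given by Theorem \ref{MT} with the uniform integrability established in Lemma \ref{ui2}. Specifically, set
\[
X_n := \frac{B(c_n,n) - \beta(c_n,n)}{\sqrt{n}},
\]
so Theorem \ref{MT} gives $X_n \Rightarrow g(\lambda_0) Z$ with $Z$ standard normal, and Lemma \ref{ui2} gives that for every $k \ge 1$ the family $\{X_n^k : n \ge n_0\}$ is uniformly integrable. The standard fact that convergence in distribution together with uniform integrability of the $k$-th powers implies convergence of $k$-th moments then yields $\E[X_n^k] \to g(\lambda_0)^k \, \E[Z^k]$. Since $\E[Z^k] = 0$ for odd $k$ and $\E[Z^k] = (k-1)!!$ for even $k$, multiplying through by $n^{k/2}$ gives the two asymptotics for $\E[(B(c_n,n)-\beta(c_n,n))^k]$.

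For the variance, I would expand
\[
\Var B(c_n,n) = \E[(B(c_n,n) - \beta(c_n,n))^2] - (\E B(c_n,n) - \beta(c_n,n))^2.
\]
The $k=2$ case of the moment asymptotic gives that the first term is $\sim g(\lambda_0)^2\, n$. The $k=1$ case gives $\E B(c_n,n) - \beta(c_n,n) = o(\sqrt n)$, so the squared term is $o(n)$ and is absorbed into the error. This produces $\Var B(c_n,n) \sim n\, g(\lambda_0)^2$. In the case $c_n = o(n)$ we have $\alpha_0 = 0$, hence $\lambda_0 = w^{-1}(0) = 0$, and by definition $g(0) = 1/\sqrt{2}$, yielding $\Var B(c_n,n) \sim n/2$.

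The main obstacle is not really an obstacle at this stage, since all the heavy lifting has been done upstream: Theorem \ref{MT} required R\'enyi's central limit theorem together with the duality identity $C(b,n) = b - (n - N_0(b,n))$, and Lemma \ref{ui2} required the concentration bound \eqref{ghosh} obtained via the bounded size-bias coupling. The only mild care needed here is verifying that uniform integrability of \emph{all} powers (not just the second) follows from Lemma \ref{ui2} --- but that lemma is explicitly stated for every $k$, so the implication is immediate. The passage from the second centered moment to the variance is then purely algebraic, using only the $k = 1$ bound to control the mean shift.
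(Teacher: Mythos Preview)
Your proof is correct and follows exactly the paper's approach: combine the distributional limit of Theorem \ref{MT} with the uniform integrability of Lemma \ref{ui2} to obtain convergence of moments, then read off the variance from the $k=1$ and $k=2$ cases. You have simply spelled out in detail what the paper's one-sentence proof leaves implicit, including the algebraic step $\Var B(c_n,n) = \e[(B-\beta)^2] - (\e B - \beta)^2$ and the identification $g(0)=1/\sqrt{2}$ for the $c_n=o(n)$ case.
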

In the display, $(k-1)!!=(k-1)(k-3)\cdots (5)(3)(1)$.

\begin{proof}
These claims follow from the distributional limit in Theorem \ref{MT}, together with the uniform integrability from Lemma \ref{ui2}, and the moments of the standard normal.
\end{proof}

For the reader's convenience, we re-formulate some of our results for $c = o(n)$:
\begin{cor} \label{on}
If $c = o(n)$, then
\[
\E B(c_n,n) \sim \gamma(c) \sqrt{2cn} \eand \Var B(c_n, n) \sim 2c(1-\gamma(c)^2)\,n
\]
for $\gamma$ as in \eqref{gamma formula 0}.
\end{cor}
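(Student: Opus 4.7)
The plan is to reduce to the regimes already treated earlier in the paper by a subsequence argument, then verify that the two unified formulas $\gamma(c)\sqrt{2cn}$ and $2c(1-\gamma(c)^2)n$ reproduce the correct asymptotics in both the fixed-$c$ and the divergent-$c$ sub-cases covered by Corollaries \ref{cor fixed c}--\ref{var} and Corollaries \ref{cor c grows}, \ref{cor var}. Concretely, to show that $\E B(c_n,n)/(\gamma(c_n)\sqrt{2c_n n}) \to 1$ and $\Var B(c_n,n)/(2c_n(1-\gamma(c_n)^2)n) \to 1$, it suffices to show that every subsequence of $\{c_n\}$ has a further subsequence along which these ratios converge to $1$. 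Passing to such a subsequence, Bolzano--Weierstrass (in $\mathbb{N}\cup\{\infty\}$) lets me assume either that $c_n$ is eventually constant or that $c_n \to \infty$.

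In the eventually-constant case $c_n\equiv c$, Corollary \ref{cor fixed c} with $k=1$ gives $\E B(c,n)\sim \sqrt{2n}\,\E[T_c^{1/2}]$, and by the definition $\gamma(c)=\E[T_c^{1/2}]/\sqrt{c}$ from subsection \ref{sect gamma} this is exactly $\gamma(c)\sqrt{2cn}$. The variance claim in this case is just Corollary \ref{var} rewritten.

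In the divergent case $c_n\to\infty$ with $c_n=o(n)$, Corollary \ref{cor c grows} with $k=1$ gives $\E B(c_n,n)\sim \sqrt{2c_n n}$, and since $\gamma(c_n)\to 1$ (by \eqref{cx}, or equivalently the $k=1$ case of the classical $\Gamma(c+k/2)/(\Gamma(c)\sqrt{c}^{\,k})\to 1$), this equals $(1+o(1))\gamma(c_n)\sqrt{2c_n n}$ as needed. For the variance, Corollary \ref{cor var} applies with $\alpha_0=0$, so $\lambda_0=w^{-1}(0)=0$ and $g(\lambda_0)=g(0)=1/\sqrt 2$, yielding $\Var B(c_n,n)\sim n/2$. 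It remains to check $2c_n(1-\gamma(c_n)^2)\to 1/2$: from \eqref{cx} we have $\gamma(c)=1-1/(8c)+O(1/c^2)$, hence $1-\gamma(c)^2 = 1/(4c) + O(1/c^2)$, so $2c(1-\gamma(c)^2)=1/2+O(1/c)\to 1/2$.

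There is no serious obstacle here: the only thing to verify is that the single formula $2c(1-\gamma(c)^2)n$ correctly interpolates between the fixed-$c$ values produced by Corollary \ref{var} and the limit $n/2$ produced by Corollary \ref{cor var} for $\alpha_0=0$, which is exactly what the Stirling expansion \eqref{cx} delivers. The mean formula is handled more straightforwardly because $\gamma(c)\to 1$ means $\gamma(c_n)$ can be absorbed into the $1+o(1)$ factor when $c_n\to\infty$, while the definition of $\gamma$ makes the fixed-$c$ case automatic.
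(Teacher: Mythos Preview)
Your proof is correct and follows essentially the same approach as the paper's: split into the fixed-$c$ and $c_n\to\infty$ sub-cases and invoke the earlier corollaries in each. The paper's proof is the one-line ``Combine Corollary \ref{cor var} with equations \eqref{first asymptotic moment} and \eqref{var.limit} and Corollary \ref{var},'' and your subsequence argument is exactly the mechanism that makes this combination rigorous for an arbitrary sequence $c_n=o(n)$; you also use Corollary \ref{cor c grows} for the mean in the divergent case rather than extracting it from the $k=1$ centered moment in Corollary \ref{cor var}, but this is an inessential variation.
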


\begin{proof}
Combine Corollary \ref{cor var} with equations \eqref{first asymptotic moment} and \eqref{var.limit} and Corollary \ref{var}.
\end{proof}

The claim for the expectation in Corollaries \ref{cor var} and \ref{on} extends \cite[Th.~1]{KuhnStruik} from the regime $c = o(n^{1/4})$ to $c = O(n)$.  Furthermore, when $c = o(n)$, we have:
\[
\lim_{n \to \infty} \frac{\sqrt{\Var B(c, n)}}{\E B(c,n)} = \frac{\sqrt{1 - \gamma(c)^2}}{\gamma(c)} = \frac{1}{2\sqrt{c}} + \frac{1}{32\sqrt{c}^3} - \frac{9}{1024 \sqrt{c}^5} + \cdots
\]
This justifies the following claim made in \cite[p.~221]{KuhnStruik}: ``It turns out that the variance, when compared to the expected [value], is relatively low, especially if the number [$c$] ... is not too small.''

{\small {\subsubsection*{Acknowledgements} SG's research was partially supported by NSF grant DMS-1201542, Emory University, and the Charles T.~Winship Fund.}}

\providecommand{\bysame}{\leavevmode\hbox to3em{\hrulefill}\thinspace}
\providecommand{\MR}{\relax\ifhmode\unskip\space\fi MR }
\providecommand{\MRhref}[2]{%
  \href{http://www.ams.org/mathscinet-getitem?mr=#1}{#2}
}
\providecommand{\href}[2]{#2}

\end{document}